\newtheorem{theorem}{\bf Theorem}[section]
\newtheorem{proposition}[theorem]{\bf Proposition}
\newtheorem{definition}[theorem]{\bf Definition}
\newtheorem{Theorem}{\bf Theorem}
\newtheorem{lemma}[theorem]{\bf Lemma}
\newtheorem{corollary}[theorem]{\bf Corollary}
\theoremstyle{definition}
\newtheorem*{remark}{\bf Remark}
\def\C{{\mathbb C}}
\def\N{{\mathbb N}}
\def\P{\mathbb{P}}
\def\bif{\textup{bif}}
\def\Mand{\mathbf{M}}
\def\and{{\quad\text{and}\quad}}
\title{Distribution of points with prescribed derivative in polynomial dynamics}
\author{Thomas Gauthier $\&$ Gabriel Vigny}
\address{LAMFA, Universit\'e de Picardie Jules Verne, 33 rue Saint-Leu, 80039 AMIENS Cedex 1, FRANCE}
\email{thomas.gauthier@u-picardie.fr, gabriel.vigny@u-picardie.fr}
\thanks{Both authors are partially supported by the ANR grant Lambda ANR-13-BS01-0002.}\thanks{The first author is partially supported by a PEPS ``Jeune-s Chercheur-e-s'' Grant.}
\begin{document}
\begin{abstract}
In analogy to the equidistribution of preimages of a prescribed point by the iterates of a polynomial map $f$ in $\C$ towards the equilibrium measure, we show here the equidistribution of points $z$ for which $(f^n)'(z)=a$ for suitable $a$ towards the equilibrium measure. We then give a similar statement in the space of degree $d$  polynomials for the equidistribution of parameters for which the $n$-derivative at a given critical value has a prescribed derivative towards the activity current of the corresponding critical point.
\end{abstract}

\maketitle

\section{Introduction}
 
In the first part of the article, we are interested in the equidistribution of points with prescribed derivative for a polynomial map $f:\C\rightarrow\C$ of degree $d\geq 2$ in $\C$. Recall for that  that the \emph{Green function} of $f$ is 
\[g_{f}(z):=\lim_{n\rightarrow\infty}d^{-n}\log\max\left\{1,|f^n(z)|\right\}~, \ z\in\C~.\]
The Julia set $J_f$ of $f$ is $J_f:=\partial\{g_f=0\}$. The probability measure $\mu_f:=dd^c_zg_f$ is known as the \emph{equilibrium measure} of $f$. It is the unique  measure of maximal entropy $\log d$ of $f$ and its support is the Julia set of $f$. This measure describes many equidistribution phenomena, notably the following: there exists a set $E$ containing at most one point such that for all $a\in \C \backslash E$, the measure equidistributed on the preimages of $a$ converges to $\mu_f$:
\[\lim_n \frac{1}{d^n} \sum_{ f^n(z)= a} \delta_z = \mu_f \] 
in the sense of measure (we take into account the multiplicity in the sum). This result is due to Brolin~\cite{Brolin} and has been extended to the case of rational maps by Ljubich~\cite{ljubich} and independently by Freire, Lopes and Ma{\~n}{\'e}~\cite{FLM}, a quantified statement has been established in  \cite{DrasinOkuyama}. See \cite{FornaessSibony, briendduval2, DinhSibony3} for similar results in higher dimensions.

We want to understand a similar statement but for derivatives. Let us give some motivations for that :
\begin{itemize}
\item the solutions of $f^n(z)=z$ are exactly the $n$-periodic points which are known to equidistribute towards the equilibrium measure (see \cite{ljubich}). Then, one can ask what is exactly the multiplicity of the solutions and for that one need to solve   $f^n(z)=z$ and  $(f^n)'(z)=1$. Then, if the solutions of $(f^n)'(z)=1$ were far from the Julia set, one could conclude that the solution of $f^n(z)=z$ are mostly simple. In fact, Theorem~\ref{equi_poly} below says that the solutions of $(f^n)'(z)=1$ tends to accumulate on the Julia set $J_f$ of $f$, making that approach ineffective. Similarly, assume that one wants to compute the Lyapunov exponent of $f$ by computing the derivative $f^n(z)$ at some generic point $z$ in the support of $\mu_f$,  Theorem~\ref{equi_poly} shows that a small error in the selection of $z$ can give any possible result for the derivative!  
\item by the chain rule, the map $(z,n) \mapsto (f^n)'(z)$ defines a cocycle, the article thus addresses the question of the equidistribution of the preimages of a prescribed target by a cocyle in the simplest case. 
\item more generally, we think of polynomial maps of $\C$ as a test case, the questions raised in the article can be asked in a lot of situations (e.g. H\'enon mappings, rational maps in higher dimension). We will study the equidistribution towards activity currents in the second part of the article.
\end{itemize} 

We now state our results. For $\lambda \in \C$, we denote by  $\nu_n^\lambda$ the following probability measure:
\[\nu_n^\lambda :=\frac{1}{d^n-1} \sum_{ (f^n)'(z)= \lambda } \delta_z,\] 
where the sum is taken with multiplicity. Our first result is the following equidistribution statement of $\nu_n^\lambda$ towards the equilibrium measure $\mu_f$ of $f$:
\begin{Theorem}\label{equi_poly}
Let $f:\C\longrightarrow\C$ be a polynomial map of degree $d\geq2$ and let $\nu_n^\lambda$ and $\mu_f$ be the measures defined above. There exists a polar set $E_f\subset\C^*$ such that
\begin{enumerate}
\item For all $\lambda\in\C\setminus E_f$, one has $\nu_n^\lambda \to \mu_f$ in the sense of measures,
\item If $f$ has no siegel disk and no escaping critical points, one has $E_f=\emptyset$,
\item If $f$ is hyperbolic, one also has then $E_f=\emptyset$.
\end{enumerate}
\end{Theorem}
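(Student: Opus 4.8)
The plan is to work with the sequence of functions $\Phi_n(z,\lambda):=d^{-n}\log|(f^n)'(z)-\lambda|$, which are plurisubharmonic on $\C_z\times\C_\lambda$ since $(f^n)'(z)-\lambda$ is holomorphic; note $\nu_n^\lambda=\tfrac{d^n}{d^n-1}\,dd^c_z\Phi_n(\cdot,\lambda)$, so it suffices to control the $\Phi_n$. First I would prove an unconditional upper bound: from the chain rule $(f^n)'=\prod_{k=0}^{n-1}f'\circ f^k$, the elementary estimates $|f^k(z)|\lesssim e^{d^kg_f(z)}$ and $|f'(w)|\lesssim(1+|w|)^{d-1}$, and $\sum_{k<n}d^k=\tfrac{d^n-1}{d-1}$, one gets $|(f^n)'(z)|\le C^n e^{(d^n-1)g_f(z)}$, hence $\limsup_n\Phi_n(z,\lambda)\le g_f(z)$, locally uniformly in the strong sense that $\limsup_n\sup_{L\times K'}(\Phi_n-g_f)\le0$ for all compacts $L\Subset\C$, $K'\Subset\C$. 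In particular $(\Phi_n)$ is relatively compact in $L^1_{\mathrm{loc}}$ and every limit value is $\le g_f$.

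The heart of the proof is to show that $g_f$ is the \emph{only} limit value, i.e. $\Phi_n\to g_f$ in $L^1_{\mathrm{loc}}(\C\times\C^*)$. On $\C\setminus\{g_f=0\}$ this is easy: the reverse inequality $|f^k(z)|\gtrsim e^{d^kg_f(z)}$ for $k$ large gives $d^{-n}\log|(f^n)'(z)|\to g_f(z)$, and since $g_f>0$ there the perturbation by $\lambda$ is negligible, so $\Phi_n(z,\lambda)\to g_f(z)$. The delicate region is the filled Julia set $K_f:=\{g_f=0\}$, where $g_f\equiv0$ but $(f^n)'$ need not decay. Here I would invoke that every Fatou component of a polynomial is eventually an attracting basin, a parabolic basin, or a Siegel disk (Sullivan; no wandering domains, no Herman rings), and that on each of these $(f^n)'$ stays bounded on compact subsets (tending to $0$ in the attracting and parabolic cases); thus $\sup_n|(f^n)'|<\infty$ on compacts of $K_f^\circ$. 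To convert this pointwise boundedness into genuine $L^1$-control I would integrate in $\lambda$: for $K'=\{\delta\le|\lambda|\le M\}$ the mean-value property in $\lambda$ gives $\int_{K'}\log|a-\lambda|\,dA(\lambda)=|K'|\log|a|+o(1)$ as $|a|\to\infty$ and a uniformly bounded quantity for $a$ in any fixed compact; combining the two regimes above, $\int_{K'}\Phi_n(z,\lambda)\,dA(\lambda)\to|K'|\,g_f(z)$ pointwise off $J_f$, and by dominated convergence $\int_{L\times K'}\Phi_n\to\int_{L\times K'}g_f$. Together with $\int(\Phi_n-g_f)^+\to0$ from the upper bound, this yields $\Phi_n\to g_f$ in $L^1(L\times K')$, hence in $L^1_{\mathrm{loc}}(\C\times\C^*)$.

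To descend from this joint convergence to the slice statement — and to locate the polar set $E_f$ — I would fix an exhaustion $L_j\uparrow\C$ and apply a Hartogs-type lemma to the subharmonic-in-$\lambda$ functions $\lambda\mapsto\int_{L_j}\Phi_n(z,\lambda)\,dA(z)$, which converge in $L^1_{\mathrm{loc}}(\C^*)$ to the constant $\int_{L_j}g_f$: such an $L^1$-convergence of subharmonic functions forces $\limsup_n$ to equal the limit off a polar set $E_{L_j}$, and combined with the uniform upper bound this upgrades to $\int_{L_j}|\Phi_n(\cdot,\lambda)-g_f|\to0$ for $\lambda\notin E_{L_j}$. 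Setting $E_f:=\bigcup_j E_{L_j}\subset\C^*$, which is polar, for $\lambda\in\C^*\setminus E_f$ one gets $\Phi_n(\cdot,\lambda)\to g_f$ in $L^1_{\mathrm{loc}}(\C)$, whence $\nu_n^\lambda=\tfrac{d^n}{d^n-1}dd^c_z\Phi_n(\cdot,\lambda)\to dd^cg_f=\mu_f$. This settles (1). I expect the main obstacle to be precisely the identification of the limit on $K_f$ in the second step — the Fatou‑component analysis together with the integration‑in‑$\lambda$ device that turns pointwise bounds into $L^1$ estimates.

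For (2) and (3) one must improve this to $\Phi_n(\cdot,\lambda)\to g_f$ in $L^1_{\mathrm{loc}}$ for \emph{every} $\lambda\in\C^*$, i.e. $E_f=\emptyset$. If $f$ has no Siegel disk, the periodic Fatou components are only attracting and parabolic basins, on which $(f^n)'\to0$ locally uniformly; so for any fixed $\lambda\ne0$, $\Phi_n(\cdot,\lambda)\to0=g_f$ locally uniformly on $K_f^\circ$. If moreover $f$ has no escaping critical point, the zero divisor of $(f^n)'$, namely $\sum_{k<n}(f^k)^*[\mathrm{Crit}(f)]$, is carried by $K_f$, which lets one control $(f^n)'-\lambda$ near $J_f$ from the $\C\setminus K_f$ side and obtain $\Phi_n(\cdot,\lambda)\to g_f$ pointwise off $J_f$ with locally uniform lower bounds; since two subharmonic functions agreeing almost everywhere coincide, this forces $L^1_{\mathrm{loc}}$ convergence, so $E_f=\emptyset$. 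Finally, a hyperbolic $f$ has neither escaping critical points nor Siegel disks, so the previous case applies; and as $J_f$ then has zero Lebesgue measure, the pointwise convergence off $J_f$ already gives the $L^1_{\mathrm{loc}}$ statement directly.
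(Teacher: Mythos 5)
Your approach to item~(1) is genuinely different from the paper's. You work directly with the plurisubharmonic family $\Phi_n(z,\lambda)=d^{-n}\log|(f^n)'(z)-\lambda|$ on $\C_z\times\C_\lambda$, prove the sharp upper bound $\limsup\Phi_n\le g_f$ locally uniformly, identify the $L^1_{\mathrm{loc}}$ limit as $g_f$, and descend to slices by a Hartogs-type lemma to produce the polar exceptional set. The paper instead runs the argument on the birational model $\P^1\times\P^1$ of the tangent bundle via the skew map $F(z,u)=(f(z),f'(z)u)$: it establishes algebraic stability, proves $d^{-n}(F^n)^*\omega_2\to T_F=\pi_1^*\mu_f$ using DSH functions and PB measures (Lemma~\ref{lm:cvGreen}, Corollary~\ref{cor:PB}), and then slices by $[u=\lambda]$ (Proposition~\ref{for_u_0}, Theorem~\ref{equi_poly1}). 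The two proofs are conceptually parallel (potentials on a product space; a density argument on the Julia set where $g_f$ is continuous; a polar exceptional set in the $\lambda$-direction), but yours is more elementary and closer to the potential-theoretic spirit of Brolin, whereas the paper's is set up so that it adapts directly to the parameter-space setting of Section~3. Two technical points you should address to make your version complete: (i) the dominated-convergence step "pointwise off $J_f$'' is not automatically a.e.\ convergence, since $J_f$ can have positive Lebesgue measure for a general polynomial; the correct closing move is the one the paper uses, namely that the extracted $L^1$-limit $\Phi$ is psh with $\Phi\le g_f$ everywhere and $\Phi=g_f$ on the Fatou set, and the (open, dense) Fatou set together with the \emph{continuity} of $g_f$ and upper semicontinuity of $\Phi$ forces $\Phi\equiv g_f$; (ii) the Hartogs lemma you invoke produces $\limsup_n=\lim$ outside a polar set only after you combine it with the one-sided bound $\Phi_n\le g_f+o(1)$; be explicit that this is what you are doing.

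For item~(3) there is a real error. You write that ``a hyperbolic $f$ has neither escaping critical points nor Siegel disks,'' and then reduce to case~(2). Hyperbolic polynomials certainly may have escaping critical points: for $p_c(z)=z^2+c$ with $|c|$ large, the critical point $0$ escapes, the Julia set is a hyperbolic Cantor set, and $p_c$ is hyperbolic. So the reduction to~(2) fails, and item~(3) is not a corollary of item~(2). This is precisely why the paper gives a \emph{separate} proof of the hyperbolic case (Theorem~\ref{equi_poly3}) using Lyubich-style inverse branches: one builds $\approx(1-\varepsilon)d^m$ disjoint pullbacks of small disks around each critical point inside the expanding neighbourhood of $J_f$, uses the mean value theorem together with the expansion to find, in each such pullback, a point $x$ with $(f^m)'(x)=\lambda$, and then equidistributes these points using the known convergence of preimages of $c$ towards $\mu_f$. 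Your potential-theoretic framework does not capture this.

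For item~(2) your sketch is in the right direction but incomplete: you claim that "two subharmonic functions agreeing almost everywhere coincide, this forces $L^1_{\mathrm{loc}}$ convergence,'' but you have not shown that the agreement holds a.e., only off $J_f$, and as noted $J_f$ need not be Lebesgue-null. What makes Brolin's argument work (and what the paper uses, see Theorem~\ref{equi_poly2}) is a one-sided extremality lemma: if $\mu'$ is a weak limit of $\nu_n^\lambda$ with potential $u'$, then $u'\le g_f$ automatically, and Brolin's Lemma~15.5 says that the \emph{only} information needed to conclude $\mu'=\mu_f$ is $\limsup_n u_n\le0$ on $J_f$, which is cheap from the chain rule bound $|(f^n)'|\le M^n$ on $J_f$. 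No lower bound near $J_f$, and no discussion of the measure of $J_f$, is required. The no-escaping-critical-points hypothesis is used only once, to show $\mathrm{supp}\,\mu'\subset J_f$ by ruling out accumulation of solutions of $(f^n)'=\lambda$ in the basin of infinity. You should restructure your item~(2) along these lines rather than trying to force an $L^1$ convergence of potentials through $J_f$.
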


Recall that $f$ is hyperbolic if it is uniformly strictly expanding on its Julia set; or equivalently if the set $\overline{\{f^n(c)\, ; \ n\geq0, \ f'(c)=0\}}$ is disjoint from the Julia set $J_f$ of $f$.

The proof of the first point (see Theorem~\ref{equi_poly1}) is deduced from the following interesting proposition (see subsection~\ref{basics_dsh} for the definition of PB measures):
\begin{proposition}\label{prop_equi_poly}
Let $f:\C\longrightarrow\C$ be a polynomial map of degree $d\geq2$ and let $\nu$ be a PB measure on $\P^1$, then we have the convergence:
\[ \lim_{n\to \infty} \frac{1}{d^n-1} \left((f^n)'\right)^*(\nu) =  \mu_f \]
\end{proposition}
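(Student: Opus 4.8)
The plan is to reduce the statement to the special case $\nu=\omega$, where $\omega$ is the Fubini--Study probability measure on $\P^1$, and then to analyse the polynomial $(f^n)'$ directly. Since $\nu$ is PB we may write $\nu=\omega+dd^c s$ with $s$ a bounded function on $\P^1$, and we set $\Psi_n:=(f^n)'$, regarded as a rational self-map of $\P^1$ of degree $\delta_n:=d^n-1$ fixing $\infty$. Then
\[\frac1{\delta_n}\Psi_n^*\nu=\frac1{\delta_n}\Psi_n^*\omega+dd^c\!\Big(\frac1{\delta_n}\,s\circ\Psi_n\Big),\]
and because $\|s\circ\Psi_n\|_\infty\le\|s\|_\infty$ the second term goes to $0$ in the sense of currents. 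Hence it is enough to prove that $\delta_n^{-1}\Psi_n^*\omega\to\mu_f$.

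In an affine chart $\omega=dd^c\tfrac12\log(1+|w|^2)$, so $\delta_n^{-1}\Psi_n^*\omega=dd^c\psi_n$ with $\psi_n:=\tfrac1{2\delta_n}\log\bigl(1+|(f^n)'|^2\bigr)$; since $\tfrac12\log(1+|w|^2)-\log^+|w|$ is bounded, it suffices to show that $v_n:=\delta_n^{-1}\log^+|(f^n)'|$ converges to $g_f$ in $L^1_{\mathrm{loc}}(\C)$. By the chain rule, $\log|(f^n)'(z)|=\sum_{k=0}^{n-1}\log|f'(f^k(z))|$. The standard uniform bounds $\log^+|f^k|=d^kg_f+O(1)$ (telescoped from $\log^+|f(w)|=d\log^+|w|+O(1)$) together with $\log^+|f'(w)|\le(d-1)\log^+|w|+O(1)$ then yield the uniform upper estimate $v_n\le g_f+O(n/\delta_n)$; combined with $v_n\ge0$ this makes the family $(v_n)_{n\ge1}$ locally uniformly bounded for large $n$.

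For the pointwise limit, on the filled Julia set $\{g_f=0\}$ the estimate $0\le v_n\le g_f+o(1)$ forces $v_n\to0=g_f$. On the escaping set $\{g_f>0\}$, remove the countable grand orbit $\bigcup_{k\ge0}(f^k)^{-1}(\{f'=0\})$, on which $(f^n)'$ may vanish; off this set the chain-rule sum has only finitely many terms with $f^k(z)$ bounded, contributing an amount independent of $n$, while the remaining terms are $\sim(d-1)d^kg_f(z)$, so $v_n(z)\to g_f(z)$. Thus $v_n\to g_f$ Lebesgue-almost everywhere, and dominated convergence (using the local bound above) gives $v_n\to g_f$ in $L^1_{\mathrm{loc}}(\C)$, hence $\delta_n^{-1}\Psi_n^*\omega=dd^c\psi_n\to dd^cg_f=\mu_f$ as currents on $\C$. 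Since each $\delta_n^{-1}\Psi_n^*\omega$ is a probability measure on $\P^1$ and $\mu_f$ has mass $1$, no mass escapes to $\infty$, so the convergence holds on $\P^1$; together with the first paragraph this proves the proposition.

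The one genuinely delicate step is the lower bound on $\sum_{k<n}\log|f'(f^k(z))|$: because $\log|f'|$ is unbounded below near the critical points of $f$, uniform convergence cannot be expected and one must work almost everywhere, the exceptional set being the Lebesgue-null grand orbit of $\{f'=0\}$. An alternative to the hands-on computation is to note that on $\C$ one has $dd^c\log|(f^n)'|=\sum_{k=0}^{n-1}(f^k)^*D$, where $D$ is the critical divisor of $f$, and to combine Brolin's theorem $d^{-k}(f^k)^*\delta_c\to\mu_f$ with a weighted Ces\`aro argument; this however requires the critical points to be non-exceptional (i.e.\ $f$ not affinely conjugate to $z\mapsto z^d$), so the approach above is more uniform.
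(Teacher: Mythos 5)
Your proof is correct, but it takes a genuinely different route from the paper's. The paper proves this proposition by lifting to the tangent map $F(z,u)=(f(z),f'(z)\cdot u)$ on $\P^1\times\P^1$: it establishes algebraic stability, shows $d^{-n}(F^n)^*(\omega_2)\to T_F=\pi_1^*(\mu_f)$ via dsh-function estimates near the attracting point at infinity and pluri-fine continuity, and then slices with $[u=1]$ using a Fubini/inversion trick. You instead stay in one variable: after reducing to $\nu=\omega$ via the bounded quasi-potential (which is exactly how the paper also exploits the PB hypothesis), you compute the potential of $(d^n-1)^{-1}((f^n)')^*\omega$ directly, showing $(d^n-1)^{-1}\log^+|(f^n)'|\to g_f$ in $L^1_{\mathrm{loc}}$ via the chain rule. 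This is closer in spirit to the paper's Brolin-type ``Complement 1'' (Theorem~\ref{equi_poly2}), but crucially you avoid its extra hypotheses (no Siegel disks, no escaping critical points): those are needed there to control $\log|(f^n)'-\lambda|$ from below for a \emph{fixed} target $\lambda$, whereas your potential $\log^+|(f^n)'|$ is nonnegative by fiat, so on the non-escaping set only the uniform upper bound $v_n\le g_f+O(n/d^n)$ is needed, and on the escaping set the problematic lower bounds occur only on the countable (hence Lebesgue-null) grand orbit of the critical set. Your argument is more elementary and self-contained; what the paper's tangent-map formalism buys is transferability --- the same scheme is reused almost verbatim for the parameter-space statement (Theorem~\ref{equi_bif}), where an inverse-branch or one-variable potential computation of your type is not available. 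Two points worth making explicit if you write this up: the uniformity of the $O(1)$ in $\log^+|f^k|=d^kg_f+O(1)$ (obtained by telescoping, as you indicate) is what makes the upper bound locally uniform and the dominated-convergence step legitimate; and the mass argument at the end is needed precisely because $L^1_{\mathrm{loc}}(\C)$ convergence of potentials only gives convergence of currents on $\C$, not on $\P^1$.
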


\begin{remark} \normalfont
\begin{enumerate}
\item In particular, for quadratic polynomial maps $f$, the result is true for all $\lambda\neq0$ as long as $f$ does not have a Siegel disk.
\item The case $\lambda=0$ is the equidistribution of the preimages of the critical sets which is known to fail if and only if some critical points is in the exceptional set, i.e. $f$ is affine conjugate to $z^d$.
\item Theorem~\ref{equi_poly} can be, at least partially, extended to rational maps of $\P^1$ (see Remarks~\ref{remark_equidistibution1} and \ref{remark_equidistibution2}). Nevertheless,  Theorem~\ref{equi_poly} is invariant under affine conjugacy whereas it is not under Moebius conjugacy hence we choose to stick to polynomial maps.
\end{enumerate}
\end{remark}
The idea of the proof of the first point of Theorem~\ref{equi_poly} is to study the dynamics of the tangent map $F(z,u)=(f(z),f'(z)\cdot u)$ (in $\C^2$). We show that its Green current is in fact the pull back of $ \mu_f$ by the projection $\pi_1$  on the first coordinate and that $(d^n-1)^{-1}(F^n)^*([u=1])$ converges the Green current (Proposition~\ref{for_u_0}). Then we show the convergence (for $\lambda$ outside a pluripolar set) of the  intersection $(d^n-1)^{-1}(F^n)^*([u=1]) \wedge [u=\lambda]$ towards $\pi_1^*(\mu_f)\wedge [u=\lambda]$ which concludes the proof (Theorem~\ref{equi_poly1}). It should be possible to show that $E_f$ is empty with that approach using the recent theory of Dinh and Sibony of density of currents \cite{DS14}, as we explain in Remark~\ref{remark_equidistibution1}. We then give the proof of the second and third points of Theorem~\ref{equi_poly} using the classical approaches of Brolin and Ljubich, though they seem to generalize only to very specific other cases.

~
 
In the second part of the article, we focus on bifurcation phenomena in parameter spaces of polynomial maps of $\C$ of a given degree $d\geq2$. For $c=(c_1,\ldots,c_{d-2})\in\C^{d-2}$ and $a\in\C$, we let
\[P_{c,a}(z):=\frac{1}{d}z^d+\sum_{j=2}^{d-1}(-1)^{d-j}\frac{\sigma_{d-j}(c)}{j}z^j+a^d~, \ z\in\C~,\]
with $\sigma_k(c)$ the monic homogeneous degree $k$ symmetric function in the $c_i$'s. 
This family is known to be a finite branched cover of the \emph{moduli space} $\mathcal{P}_d$ of critically marked degree $d$ polynomials, i.e. the space of affine conjugacy classes of degree $d$ polynomials with $d-1$ marked critical points (see e.g.~\cite[\S 5]{favredujardin}). The critical points of $P_{c,a}$ are exactly $c_0,\ldots,c_{d-2}$, with the convention $c_0:=0$.
%Notice that they depend algebraically on $(c,a)\in\C^{d-1}$.

~

Pick $0\leq i\leq d-2$. As it now is classical, we say that a critical point $c_i$ is \emph{passive} at $(c_*,a_*)\in \C^{d-1}$ if there exists a neighborhood $U\subset \C^{d-1}$ of $(c_*,a_*)$ such that the sequence $F^i_n:\C^{d-1}\to\C$ of holomorphic maps defined by
\[F^i_{n}(c,a):=P_{c,a}^n(c_i)\]
is a normal family on $U$. Otherwise, we say that $c_i$ is \emph{active} at $(c_*,a_*)$. The \emph{activity locus} of $c_i$ is the set of parameters $(c,a)\in\C^{d-1}$ such that $c_i$ is active at $(c,a)$.  
We can define an \emph{activity current} $T_i$ to give a measurable sense to the notion of activity.
We denote by $\pi_{d-1}$ (resp. $\pi_1$) the canonical projections on $\mathbb{C}^{d-1}$ (resp. on $\C$).
Let $\mathcal{T}$ be the Green current of the map $f$ (it can be defined as the limit of $d^{-n} (f^n)^*(\pi_1^*(\omega_1))$). Then, by the work~\cite{BB1} of Bassanelli and Berteloot, we have $T_i := (\pi_{d-1})_*(\mathcal{T} \wedge [z= c_i] )$. The invariance of the Green current implies that it can also be defined by intersecting with the graph of the critical value $P_{c,a}(c_i)$ as below:
\begin{equation}\label{def_T_i}
T_i =  \frac{1}{d}(\pi_{d-1})_*\left(\mathcal{T} \wedge \left[z= P_{c,a}(c_i)\right] \right).
\end{equation}
 As proved by Dujardin and Favre~\cite{favredujardin}, the current $T_i$ is exactly supported by the activity locus of $c_i$. Moreover, they prove that the sequence of smooth forms $d^{-n}(F^i_n)^*\omega_{\P^1}$ converges in the weak sense of currents to $T_i$.
%\begin{theorem}[Dujardin-Favre]
%The sequence $d^{-n}(F^i_n)^*\omega_{\P^1 }$ converges in the weak sense of currents to $T_i$ which is supported by the activity locus of $c$.
%\end{theorem}

The currents $T_i$ and $\sum_i T_i$  are known to equidistribute various phenomena: parameters for which the critical point $c_i$ is preperiodic with a given orbit portrait \cite{favredujardin}, parameters admitting a cycle with a given multiplier \cite{BB2, BB3,multipliers,DistribTbif}, or parameters at which the critical points are sent to some prescribed target \cite{Dujardin2012, distribGV}.

As in the case of polynomial maps, we prove here the following:
\begin{Theorem}\label{equi_bif}
Pick any integer $0\leq i\leq d-2$. Then the following convergences holds in the weak sense of currents on $\C^{d-1}$:
\begin{enumerate}
\item for any probability measure $\nu$ with bounded potential on $\C$,
\[\lim_{n\rightarrow\infty}\frac{1}{d^n}\int_{\C}\left[(P_{c,a}^n)'(P_{c,a}(c_i))=\lambda\right]\mathrm{d}\nu(\lambda)=T_i~,\]
\item there exists a polar set $E_i\subset \C^*$ such that, for any $\lambda\in\C\setminus E_i$,
\[\lim_{n\rightarrow\infty}\frac{1}{d^n}\left[(P_{c,a}^n)'(P_{c,a}(c_i))=\lambda\right]=T_i~.\]
\end{enumerate}
\end{Theorem}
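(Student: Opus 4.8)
The argument transposes the proof of Theorem~\ref{equi_poly} and of Proposition~\ref{prop_equi_poly} to the parameter space, the activity current $T_i$ now playing the r\^ole of the equilibrium measure $\mu_f$. Write $F_k^i(c,a):=P_{c,a}^k(c_i)$ and $\Phi_n(c,a):=(P_{c,a}^n)'\bigl(P_{c,a}(c_i)\bigr)$, a polynomial on $\C^{d-1}$. The key elementary point is that, because of the normalisation chosen for $P_{c,a}$, the derivative factors completely, $P_{c,a}'(w)=\prod_{j=0}^{d-2}(w-c_j)$, the critical points being $c_0=0,c_1,\dots,c_{d-2}$; combined with the chain rule and $P_{c,a}^k\bigl(P_{c,a}(c_i)\bigr)=F_{k+1}^i(c,a)$, this yields the basic formula
\[\Phi_n(c,a)=\prod_{k=1}^{n}P_{c,a}'\bigl(F_k^i(c,a)\bigr)=\prod_{k=1}^{n}\prod_{j=0}^{d-2}\bigl(F_k^i(c,a)-c_j\bigr).\]
Thus $\Phi_n$ is the multiplier in the $u$-direction of the $n$-th iterate of the fibred tangent map $(c,a,z,u)\mapsto\bigl(c,a,P_{c,a}(z),P_{c,a}'(z)\,u\bigr)$ of the first part, started from the critical-value section $\{z=P_{c,a}(c_i)\}$; its Green current is, as in Proposition~\ref{for_u_0}, the pull-back of $\mathcal{T}$. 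The displayed identity, however, already reduces everything to the maps $F_k^i$, which are well understood after Dujardin and Favre.

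\emph{Step 1: the potentials and the case $\lambda=0$.} Taking logarithms, $\log|\Phi_n|=\sum_{k=1}^{n}\sum_{j=0}^{d-2}\log|F_k^i-c_j|$. By \cite{favredujardin}, $d^{-k}(F_k^i)^*\omega_{\P^1}\to T_i$, equivalently $d^{-k}\log^+|F_k^i|\to G_i$ in $L^1_{loc}(\C^{d-1})$, where $G_i\ge0$ is the potential of $T_i$, so that $T_i=dd^cG_i$. Since each $c_j$ is holomorphic and bounded on compact subsets, the elementary comparison $\bigl|\log^+|F_k^i-c_j|-\log^+|F_k^i|\bigr|\le C_K$ on a compact $K$, together with the relative compactness in $L^1_{loc}$ of families of psh functions bounded above, yields $d^{-k}\log|F_k^i-c_j|\to G_i$ in $L^1_{loc}$, hence $d^{-k}[F_k^i=c_j]\to T_i$, for every $j$. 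Summing the associated (geometrically decreasing, hence $L^1_{loc}$-absolutely summable) series over $k$ and $j$ and dividing by the appropriate power of $d$, one obtains that $\log|\Phi_n|$, so renormalised, converges in $L^1_{loc}(\C^{d-1})$ to $G_i$; that no extra pluriharmonic term survives is checked by comparing the logarithmic growth at infinity of $\Phi_n$ and of $G_i$. Applying $dd^c$ gives the statement for $\lambda=0$, and shows in particular that the renormalised $\log^+|\Phi_n|$ also converges to $G_i$ in $L^1_{loc}$.

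\emph{Step 2: general $\lambda$ and conclusion.} For assertion (1): if $\nu$ has bounded potential then $U_\nu-\log^+|\cdot|$ is bounded on $\C$, so $\int_\C\log|\Phi_n-\lambda|\,\mathrm{d}\nu(\lambda)=U_\nu\circ\Phi_n$ has, after renormalisation, the same $L^1_{loc}$-limit $G_i$ as $\log^+|\Phi_n|$; applying $dd^c$ gives the first convergence. For assertion (2): regard $(c,a,\lambda)\mapsto\log|\Phi_n(c,a)-\lambda|$ as a psh function on $\C^{d-1}\times\C$. These functions are uniformly bounded above on compacts and, since $|\Phi_n|\to\infty$ on $\{G_i>0\}$ while the case $\lambda=0$ controls them elsewhere, their renormalisations converge in $L^1_{loc}(\C^{d-1}\times\C)$ to the pull-back of $G_i$. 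A slicing argument — the one producing the polar exceptional set $E_f$ in Theorem~\ref{equi_poly}, cf.\ Theorem~\ref{equi_poly1} — then shows that for $\lambda$ outside a polar subset $E_i\subset\C^*$ (the value $\lambda=0$ being handled by Step~1, whence $E_i\subset\C^*$) the slice at $\lambda$ still converges in $L^1_{loc}(\C^{d-1})$; applying $dd^c$ gives the second convergence.

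The main difficulty is the exactness in Steps~1 and~2: proving that the renormalised potentials converge to $G_i$ with no positive pluriharmonic defect requires controlling $\log|\Phi_n-\lambda|$ on the bounded locus $\{G_i=0\}$, where the factors $P_{c,a}'\bigl(F_k^i(c,a)\bigr)$ may be small and slow the convergence down, and then matching the asymptotics at infinity. The second delicate point is the passage from ``almost every $\lambda$'' (immediate from Fubini) to ``$\lambda$ outside a polar set'', which relies on pluripotential slicing and is precisely what may leave $E_i$ non-empty, in full analogy with $E_f$ in Theorem~\ref{equi_poly}.
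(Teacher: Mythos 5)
Your strategy is genuinely different from the paper's: you work directly with the potentials $\frac{1}{d^n}\log|\Phi_n-\lambda|$ on $\C^{d-1}$, exploiting the factorization $P_{c,a}'(w)=\prod_{j=0}^{d-2}(w-c_j)$, whereas the paper runs the partial tangent map $\tilde f(c,a,z,u)=(c,a,P_{c,a}(z),P_{c,a}'(z)u)$ through the same algebraic-stability/dsh machinery as in the dynamical case and only slices by $[z=P_{c,a}(c_i)]$ and $[u=1]$ at the end. The factorization is correct and the reduction of $\log|\Phi_n|$ to $\sum_{k,j}\log|F^i_k-c_j|$ is a nice observation. But Step~1 has a genuine gap at exactly the hard point. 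The comparison $\bigl|\log^+|F^i_k-c_j|-\log^+|F^i_k|\bigr|\le C_K$ controls only the positive parts and gives $d^{-k}\log^+|F^i_k-c_j|\to G_i$; it says nothing about $\log^-|F^i_k-c_j|$, which is $-\infty$ along the hypersurface $\{F^i_k=c_j\}$. The assertion $d^{-k}\log|F^i_k-c_j|\to G_i$ in $L^1_{\textup{loc}}$ is equivalent to the equidistribution $d^{-k}[P^k_{c,a}(c_i)=c_j]\to T_i$ of critical orbit relations (for $j=i$, of Misiurewicz parameters), which is a substantial theorem in the spirit of \cite{favredujardin,Dujardin2012,distribGV} and certainly not a consequence of compactness of families of psh functions bounded above: compactness only yields subsequential limits $u\le G_i$, and the whole difficulty is to exclude a nontrivial negative defect $u-G_i$ (loss of mass). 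Your remark about "comparing the logarithmic growth at infinity" does not close this either: in $\C^{d-1}$ with $d-1\ge2$, two psh functions with $u\le v$ and the same logarithmic growth at infinity need not coincide, and the defect need not be pluriharmonic. Since assertion (1) and the slicing in assertion (2) are both run off the convergence claimed in Step~1, the proof as written is incomplete. The paper's substitute for this missing step is an extremality argument: uniform boundedness of the dsh potentials $\varphi_n$ near the attracting point $I'$ and on the stability locus forces any $L^1$ limit to vanish on the stability locus, and, the bifurcation locus having empty interior, pluri-fine continuity forces it to vanish identically. You would need either this kind of argument or a verified appeal to the cited equidistribution theorems for the moving targets $c_j$.

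A secondary point: with $G_i$ normalized by $d^{-k}\log^+|F^i_k|\to G_i$ (so $dd^cG_i=T_i$), the weights in your double sum satisfy $\sum_{k=1}^{n}(d-1)d^{k-n}\to d$, so the renormalized $\frac{1}{d^n}\log|\Phi_n|$ converges (if it converges) to $d\,G_i$, i.e.\ to the potential of $(\pi_{d-1})_*(\mathcal{T}\wedge[z=P_{c,a}(c_i)])$ rather than of $T_i$; this is consistent with Theorem~\ref{equiMand}, where the limit is $\mu_\Mand=2T_0$ in degree $2$, but you should track this factor of $d$ explicitly when matching the statement of Theorem~\ref{equi_bif}.
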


\par Finally, we focus on the quadratic family $p_c(z):=z^2+c$, $c\in\C$. In that very particular context, we can prove a stronger statement, which in particular imply the exceptional set is empty (see Theorem~\ref{equiMand}). Here this means that for any $\lambda\in\C$, the sequence of finite measures $\frac{1}{2^n}[(p_c^n)'(c)=\lambda]$ converges to the harmonic measure of the Mandelbrot set.

%\begin{Theorem}\label{equi_bifhigher}
%Pick any integer $1\leq q\leq d-1$ and any $q$-tuple of indices $(i_1,\ldots,i_q)$ with $0\leq i_1<\cdots<i_q\leq d-2$. Then and any $\nu$ be any probability measure with bounded potential on $\C^q$. Let also $\un_k=(n_{k,1},\ldots,n_{k,q})$ be any sequence of $q$-tuples of positive integers with $n_{k,j}\to\infty$ as $k\to \infty$ for all $1\leq j\leq q$. Then in the weak sense of currents on $\C^{d-1}$,
%\[\lim_{k\rightarrow\infty}\frac{1}{d^{n_{k,1}+\cdots+n_{k,q}}}\int_{\C}\bigwedge_{j=1}^q\left[\left(P_{c,a}^{n_{k,j}}\right)'(P_{c,a}(c_{i_j}))=\lambda_j\right]\mathrm{d}\nu(\lambda_1,\ldots,\lambda_q)=\bigwedge_{j=1}^qT_{i_j}~.\]
%\end{Theorem}

\section{In the phase space of a complex polynomial}
The aim of the present section is to prove Theorem~\ref{equi_poly}.
\subsection{A tangent map}
Let $f$ be a polynomial map of $\mathbb{C}$ of degree $d\geq2$. We can write it as $f(z)=\sum_{i=0}^d a_i z^i$. We consider the tangent map $F(z,u)=(f(z),f'(z)\cdot u) $ acting on the tangent bundle $\mathrm{Tan}(\C)$ that we write in the birational model $\mathbb{P}^1 \times \P^1$ as:
\[ F([z:t],[u:v])= \left(\left[ \sum_{i=0}^d a_i z^it^{d-i}: t^d\right] , \left[\left(\sum_{i=0}^d i a_i z^{i-1}t^{d-i}\right)u: t^d v \right] \right)                   . \] 
If $c_1, \dots , c_{d-1}$ are the critical points of $f$ in $\C$ (counted with multiplicity), then the indeterminacy points of $F$ are the points $I_j= (c_j,[1:0])$ and the points $I_\infty:=([1:0],[0:1])$. We denote by $I(F)$ the union of those points.
Observe that the (invariant) set $\{[1:0]\}\times \P^1\backslash I_\infty$ is sent to $I':=([1:0],[1:0])$ by $F$. Finally, observe that the point $I'$ is attracting.

~

 Let $\omega_i$ be the pull back of the Fubini-Study form $\omega_{\P_1}$ on $\P^1$ by the canonical projection to the $i$-th factor of  $\pi_i: \P^1\times \P^1 \to \P^1$. Let $\{\omega_i\}$ denote the class of $\omega_i$ in $H^{1,1}( \P^1\times \P^1)$. 
\begin{lemma}
The map $F$ is algebraically stable i.e. $(F^*)^n=(F^n)^*$ in $H^{1,1}( \P^1\times \P^1)$. The action of $F^*$ is given in the $(\{\omega_1\}, \{\omega_2\})$ basis by the matrix:
\[\begin{pmatrix} d & d-1 \\
                    0  & 1 
\end{pmatrix}.\]
 Its topological degree $d_t$ is $d$, in particular, it is not cohomologically hyperbolic. 
\end{lemma}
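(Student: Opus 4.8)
The plan is to verify each assertion of the lemma by direct computation in the birational model $\P^1\times\P^1$, using the explicit homogeneous formula for $F$.

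First I would compute the pullbacks $F^*\{\omega_1\}$ and $F^*\{\omega_2\}$. Since $\{\omega_1\}$ and $\{\omega_2\}$ are represented by the fibers $\{[1:0]\}\times\P^1$ (pullback of a point by $\pi_1$) and $\P^1\times\{[1:0]\}$ (pullback of a point by $\pi_2$), it suffices to count, with multiplicity, the preimages of a generic fiber. The first coordinate of $F$ is the degree $d$ map $[z:t]\mapsto\left[\sum_i a_iz^it^{d-i}:t^d\right]$, which does not depend on $[u:v]$; hence $F^*\{\omega_1\}=d\{\omega_1\}$, giving the first column $(d,0)^t$. For the second coordinate $[u:v]\mapsto\left[\left(\sum_i ia_iz^{i-1}t^{d-i}\right)u:t^dv\right]$, the preimage of a generic point $[u_0:v_0]$ is a curve in $\P^1\times\P^1$ of bidegree $(d-1,1)$: the equation $v_0\left(\sum_i ia_iz^{i-1}t^{d-i}\right)u=u_0t^dv$ has degree $d-1$ in $[z:t]$ (since $f'$ has degree $d-1$) and degree $1$ in $[u:v]$. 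This yields $F^*\{\omega_2\}=(d-1)\{\omega_1\}+\{\omega_2\}$, the second column $(d-1,1)^t$, and hence the stated matrix.

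Next I would establish algebraic stability. The obstruction to $(F^*)^n=(F^n)^*$ is that some component of the critical set of $F$ is eventually contracted to an indeterminacy point; equivalently, one checks that no hypersurface is collapsed by an iterate onto a point of $I(F)=\{I_j\}\cup\{I_\infty\}$. The only curve contracted by $F$ is the invariant line $\{[1:0]\}\times\P^1$, which is collapsed to the point $I'=([1:0],[1:0])$; since $I'$ is an attracting fixed point (as already observed in the text) and in particular $I'\notin I(F)$, its forward orbit never meets $I(F)$. Thus $F$ is algebraically stable, and the matrix above is the action of $(F^n)^*$ up to the $n$-th power.

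Finally, for the topological degree: $d_t$ is the number of preimages of a generic point $([w_0:s_0],[u_0:v_0])$ under $F$. Solving the first coordinate equation gives $d$ choices of $[z:t]$ (generically distinct, with $t\neq0$), and for each such choice the second equation $v_0\left(\sum_i ia_iz^{i-1}t^{d-i}\right)u=u_0t^dv$ is a nondegenerate linear equation in $[u:v]$ (the coefficient $f'(z)$ is nonzero for generic $z$), giving exactly one solution; hence $d_t=d$. Alternatively, $d_t$ equals the coefficient of the top self-intersection, i.e. $\langle(F^*)\omega,(F^*)\omega\rangle$ divided appropriately, which from the matrix is read off as the relevant entry; both computations agree. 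Since the first dynamical degree is $\lambda_1=d$ (the spectral radius of the matrix, the eigenvalue $d$ dominating $1$) and $d_t=d=\lambda_1$, the map is not cohomologically hyperbolic. The main obstacle here is the bookkeeping of multiplicities and the genericity arguments near the indeterminacy points; once one is careful that the relevant preimages avoid $I(F)$ for a generic choice of point/fiber, every step is a routine degree count.
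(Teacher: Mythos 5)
Your computation of the matrix (generic fibers, the bidegree $(d-1,1)$ of the preimage of a horizontal line) and of the topological degree is correct and in fact more explicit than what the paper writes, and your criterion for algebraic stability (no curve collapsed by an iterate onto a point of $I(F)$) is the right one. The gap is in the verification of that criterion: your assertion that \emph{the only curve contracted by $F$ is the invariant line $\{[1:0]\}\times\P^1$} is false. For each critical point $c_j$ of $f$ one has $F(c_j,u)=(f(c_j),f'(c_j)\cdot u)=(f(c_j),0)$, so the vertical fiber $\{c_j\}\times\P^1$ (minus the indeterminacy point $I_j$) is also contracted, to the point $(f(c_j),[0:1])$; more generally, for the iterate $F^n$ every fiber over a critical point of $f^n$ is contracted. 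These are precisely the exceptional curves whose orbits must be checked, and your argument never addresses them, so as written the proof of algebraic stability is incomplete.

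The missing step is easy to supply and is exactly what the paper does: the section $\{u=0\}$ over $\C$ is invariant, i.e. $F(p,[0:1])=(f(p),[0:1])$ for every $p\in\C$, critical or not. Hence a contracted fiber $\{p\}\times\P^1$ (with $p\in\C$ critical for $f^n$) has image the single point $(f^n(p),[0:1])$, and its forward orbit consists of points $(f^k(p),[0:1])$ with $f^k(p)$ finite; none of these can equal $I_j=(c_j,[1:0])$ (wrong second coordinate) or $I_\infty=([1:0],[0:1])$ (wrong first coordinate). Combined with your correct observation that the line at infinity is collapsed onto the fixed point $I'\notin I(F)$, this closes the argument. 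One small additional remark: the homogeneous formula you copied from the paper contains a typo ($t^dv$ should be $t^{d-1}v$ in the second factor, otherwise the expression is not bihomogeneous); your bidegree count implicitly uses the corrected version.
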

\begin{proof}
The obstruction to the algebraic stability is the fact that some hypersurface (i.e. some curve here) is sent to some indeterminacy point by some iterate $F^n$ of $F$ (\cite{Sibony}). As $f$ is holomorphic, the only possibility is that some $\{p\}\times \P^1$ is sent to an indeterminacy set by $F^n$. By the chain rule, $F^n$ is the extension of $F^n(z,u)=(f^n(z),(f^n)'(z)\cdot u) $ on the tangent bundle $\mathrm{Tan}(\C)$ to its compactification $\mathbb{P}^1 \times \P^1$. Let $p \in \C$, observe that  $\{p\}\times\{[0:1]\}\subset \{p\}\times \P^1$ is sent to $\{f(p)\}\times\{[0:1]\}$ whether $p$ is critical or not. It follows from this that no hypersurface $\{p\}\times \P^1$ is sent to an indeterminacy point by $F^n$ for $p\in \C$. On the other hand, the point $I'$ is sent to itself by $F$ so $F^n([1:0]\times \P^1) \neq I_\infty$. So $F$ is algebraically stable. The rest of the proof follows. Observe that the first dynamical degree is by definition the spectral radius of the matrix
\[\begin{pmatrix} d & d-1 \\ 0  & 1 \end{pmatrix}\]
hence it is equal to $d=d_t$, by definition, it implies that $F$ is not cohomologically hyperbolic.
\end{proof}
The study of cohomologically hyperbolic maps is very well developed (see e.g. \cite{Guedj}), no general theory exists for not cohomologically hyperbolic. 

\subsection{Basics on dsh functions and PB measures}\label{basics_dsh}

Let us recall some facts on dsh functions (see e.g.~\cite{DinhSibonyregular}). Recall that a probability
$\nu$ in $(\P^1)^2$ has bounded quasi-potentials (or is PB) if $\nu$ admits a negative quasi-potential 
$U$ ($dd^c U+ \Omega= \nu$ where $\Omega$ is some smooth probability measure) such that $|\langle U, S\rangle| \leq C$ for any positive smooth form $S$ 
of bidegree $(1,1)$ and mass $1$. Such notion of quasi-potential can be extended to any positive 
closed current of bidegree $(1,1)$ and mass $1$  with the same bound $|\langle U, S\rangle| \leq C$. In particular, any smooth measure is PB.

\begin{definition}
We say that a function $\varphi$ on $(\P^1)^{2}$ is \emph{dsh} if, outside a pluripolar set, it can be written as a difference of quasi-psh functions.
\end{definition}
 
For example, if $\varphi \in \mathcal{C}^2$, then it is dsh.

Let $DSH\left(\left(\P^1\right)^{2}\right)$ be the space of such functions on $(\P^1)^2$. For any $\varphi\in DSH\left(\left(\P^1\right)^{2}\right)$, we write $dd^c \varphi=T^+-T^-$ where $T^\pm$ are positive closed currents of bidegree $(1,1)$.
Let $\nu$ be a PB measure on $(\P^1)^{2}$. The following defines a norm on the space $DSH\left(\left(\P^1\right)^{2}\right)$:
\[\|\varphi\|_{\nu}:=  \|\varphi\|_{L^1(\nu)} +\inf \|T^\pm\|~,\]
where the infimum is taken on all the decompositions $dd^c \varphi=T^+-T^-$ as above. It turns out that taking another PB measure $\nu'$ gives an equivalent norm on $DSH\left(\left(\P^1\right)^{2}\right)$ (see e.g. \cite[p. 283]{dinhsibony2}).

Finally, recall that a Borel set that is of measure $0$ for all the PB measures is in fact pluripolar.

\subsection{A first convergence property}
Recall that the \emph{Green measure} $\mu_f$ of the map $f$ can be defined as the limit
\[\mu_f:=\lim_n d^{-n} (f^n)^*(\omega_{\P_1})~.\]
It then follows from the fact that $F$ is a skew-product that the sequence of positive closed current $d^{-n} (F^n)^*(\omega_1)$ converges to the \emph{Green current} $T_F$ of $F$ which is a positive closed current of mass $1$ and that $T_F= \pi_1^*(\mu_f)$.

\begin{lemma}\label{lm:cvGreen}
The sequence of positive closed currents $d^{-n} (F^n)^*(\omega_2)$ converges to the Green current $T_F$ of $F$.
\end{lemma}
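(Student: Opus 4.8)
The plan is to exhibit explicit local potentials for the currents $d^{-n}(F^n)^*\omega_2$ on the chart $\C^2\subset\P^1\times\P^1$, prove that they converge in $L^1_{\mathrm{loc}}$ to a potential of $T_F=\pi_1^*\mu_f$, and then rule out any loss of mass along the boundary $\P^1\times\P^1\setminus\C^2$ by a cohomological argument. First I would record the cohomology: by the previous lemma $F$ is algebraically stable and $F^*$ acts in the basis $(\{\omega_1\},\{\omega_2\})$ by the matrix displayed there, so $(F^n)^*=(F^*)^n$ yields
\[(F^n)^*\{\omega_2\}=(d^n-1)\{\omega_1\}+\{\omega_2\}.\]
Hence each $d^{-n}(F^n)^*\omega_2$ is a positive closed $(1,1)$-current whose class tends to $\{\omega_1\}=\{T_F\}$, and whose mass with respect to $\omega_1+\omega_2$ equals $1$ for every $n$, as does that of $T_F$.

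For the potential computation, observe that on $\C^2=\C_z\times\C_u$, where $F^n(z,u)=(f^n(z),(f^n)'(z)u)$ and $\omega_{\P^1}=dd^c\tfrac12\log(1+|w|^2)$, one has $d^{-n}(F^n)^*\omega_2=dd^c g_n$ with
\[g_n(z,u):=\frac{1}{2d^n}\log\bigl(1+|(f^n)'(z)|^2|u|^2\bigr),\]
while $T_F=\pi_1^*\mu_f=dd^c(g_f\circ\pi_1)$ on $\C^2$; so it suffices to show $g_n\to g_f\circ\pi_1$ in $L^1_{\mathrm{loc}}(\C^2)$. The heart of the matter is the scalar convergence
\[\frac{1}{d^n}\log^+\!\bigl|(f^n)'(z)\bigr|\longrightarrow g_f(z)\quad\text{in }L^1_{\mathrm{loc}}(\C),\qquad \log^+t:=\log\max\{1,t\}.\]
For the upper bound, write $(f^n)'(z)=\prod_{k=0}^{n-1}f'(f^k(z))$ and combine $\log^+|f'(w)|\le(d-1)\log^+|w|+O(1)$ with the standard uniform estimate $\log^+|f^k(z)|\le d^kg_f(z)+O(1)$ to get $\tfrac{1}{d^n}\log^+|(f^n)'(z)|\le g_f(z)+o(1)$ uniformly on $\C$. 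For the lower bound, fix $z$ in the escaping set $\{g_f>0\}$ lying outside the countable set $\bigcup_{j\ge0}(f^j)^{-1}(\mathrm{Crit}(f))$: once $|f^k(z)|$ is large one has $\log|f'(f^k(z))|\ge(d-1)\log|f^k(z)|-O(1)\ge(d-1)d^kg_f(z)-O(1)$, and summing over $k$ gives $\liminf_n\tfrac{1}{d^n}\log^+|(f^n)'(z)|\ge g_f(z)$; on $\{g_f=0\}$ the upper bound already forces the limit to be $0=g_f$. Thus the scalar sequence converges pointwise almost everywhere, and the uniform upper bound provides a local $L^1$ domination, giving convergence in $L^1_{\mathrm{loc}}(\C)$. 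Since for fixed $u_0\ne0$ one has $\bigl|\tfrac{1}{2d^n}\log(1+|u_0|^2t^2)-\tfrac{1}{d^n}\log^+t\bigr|\le d^{-n}(|\log|u_0||+\log2)$ for all $t\ge0$, each slice $g_n(\cdot,u_0)$ converges to $g_f$ in $L^1_{\mathrm{loc}}(\C)$; Fubini together with the same domination and the negligibility of $\{u=0\}$ upgrades this to $g_n\to g_f\circ\pi_1$ in $L^1_{\mathrm{loc}}(\C^2)$, whence $d^{-n}(F^n)^*\omega_2\to T_F$ as currents on $\C^2$.

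It remains to pass to $\P^1\times\P^1$. The masses being bounded, take any weak limit $S$ of a subsequence. Restriction to the open set $\C^2$ gives $S|_{\C^2}=T_F|_{\C^2}$, and $T_F=\pi_1^*\mu_f$ puts no mass on the boundary curve $Y=(\{[1:0]\}\times\P^1)\cup(\P^1\times\{[1:0]\})$ (its trace measure with respect to $\omega_1+\omega_2$ is the product $\mu_f\otimes\omega_{\P^1}$, and both $\mu_f$ and $\omega_{\P^1}$ are atomless). Hence $R:=S-T_F$ is a closed positive $(1,1)$-current supported on $Y$, so by the structure theorem $R=\lambda_1[\{[1:0]\}\times\P^1]+\lambda_2[\P^1\times\{[1:0]\}]$ with $\lambda_1,\lambda_2\ge0$; since $[\{[1:0]\}\times\P^1]=\{\omega_1\}$ and $[\P^1\times\{[1:0]\}]=\{\omega_2\}$, this gives $\{S\}=(1+\lambda_1)\{\omega_1\}+\lambda_2\{\omega_2\}$. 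Comparing with $\{S\}=\{\omega_1\}$ from the first step forces $\lambda_1=\lambda_2=0$, i.e. $S=T_F$. As every subsequential limit equals $T_F$, the whole sequence converges to $T_F$.

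The main obstacle is the scalar estimate, and within it the lower bound: one must excise the (dense) set of points whose forward orbit meets a critical point of $f$, and treat the escaping set and $\{g_f=0\}$ by separate arguments because the convergence is not uniform near $J_f$. Working throughout in $L^1_{\mathrm{loc}}$ is precisely what neutralizes these difficulties, after which the compactification to $\P^1\times\P^1$ costs nothing more than the cohomological bookkeeping above.
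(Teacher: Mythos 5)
Your proof is correct, but it takes a genuinely different route from the paper's. The paper never writes down the potential of $(f^n)'$: it decomposes $\omega_2=\omega_1+(\omega_2-\omega_1)$, notes that $F^*(\omega_2-\omega_1)=\omega_2-\omega_1+dd^c\varphi$ for a dsh function $\varphi$, and controls the sums $\varphi_n=d^{-n}\sum_{k=0}^{n-1}\varphi\circ F^k$ in the DSH norm attached to a PB measure supported in an attracting neighbourhood of the fixed point $I'$; equivalence of DSH norms gives $L^1$-boundedness, any limit is shown to be of the form $\pi_1^*(v)$, pointwise convergence on the Fatou set forces $v=0$ there, and pluri-fine continuity plus the empty interior of $J_f$ kills $v$. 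You instead exhibit the explicit potential $g_n(z,u)=\frac{1}{2d^n}\log\left(1+|(f^n)'(z)u|^2\right)$, reduce to the scalar convergence $d^{-n}\log^+|(f^n)'|\to g_f$ in $L^1_{\mathrm{loc}}(\C)$ via the chain rule and the uniform estimate $\left|d^{-k}\log^+|f^k|-g_f\right|=O(d^{-k})$, and recover the global statement on $\P^1\times\P^1$ by a support-theorem and cohomology argument on the boundary divisor. Both arguments are complete. Yours is more elementary and self-contained (no DSH formalism, no pluri-fine continuity, and it localizes the only genuinely dynamical input in the two Green-function inequalities); the paper's soft argument is the one that transfers to the parameter-space analogue of Section 3, where the current must in addition be wedged with $[z=P_{c,a}(c_i)]$ and explicit potentials are less convenient. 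Two points worth keeping clean in your write-up: the identity $d^{-n}(F^n)^*\omega_2=dd^cg_n$ holds on all of $\C^2$ because every indeterminacy point of $F^n$ lies on the boundary divisor $Y$; and in the last step positivity of $R=S-T_F$ is not actually needed --- $R$ is a normal current supported on $Y$, so the support theorem gives $R=\lambda_1\left[\{[1:0]\}\times\P^1\right]+\lambda_2\left[\P^1\times\{[1:0]\}\right]$ with real $\lambda_i$, and $\{S\}=\{\omega_1\}$ forces $\lambda_1=\lambda_2=0$ by linear independence of $\{\omega_1\}$ and $\{\omega_2\}$.
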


\begin{proof}
Write $\omega_2= \omega_1 + \omega_2-\omega_1$. Since $d^{-n} (F^n)^*(\omega_1)$ converges to $T_F$, all there is left to prove is that $d^{-n} (F^n)^*(\omega_2- \omega_1)$ converges to $0$ in the sense of currents. Observe that $F^*(\omega_2- \omega_1)$ is cohomologous to $\omega_2- \omega_1$ hence we can write 
\[F^*(\omega_2- \omega_1)= \omega_2- \omega_1+dd^c \varphi\]
where $\varphi$ is smooth outside $I(F)$ and is a dsh function (in fact $\varphi$ is quasi-psh since $F^*(\omega_1)$ is a smooth form).

Let $W$ be a small neighborhood of $I'$ such that $F(W)\subset W$ and where $\varphi$ is uniformly bounded. Let $\nu_W$ be a smooth (hence PB) probability measure with support  in $W$. Then, a straight-forward induction gives:
\[d^{-n} (F^n)^*(\omega_2- \omega_1)= d^{-n}(\omega_2- \omega_1)+dd^c  \left(d^{-n}\sum_{k=0}^{n-1}  \varphi \circ F^k \right).\]
The sequence of function $\varphi_n:=d^{-n}\sum_{k=0}^{n-1}  \varphi \circ F^k$ is then a sequence of dsh functions. Furthermore, as $F(W)\subset W$, we see that $\|\varphi_n\|_{\infty, W}\leq nd^{-n} \|\varphi\|_{\infty, W}\leq C$ where $C$ is a constant that does not depend on $n$.  In particular, $\|\varphi_n\|_{\nu_W}$ is uniformly bounded. As all the $DSH$-norms are equivalent, we deduce that $(\varphi_n)$ is bounded in $L^1$ for the standard Fubini Study measure in $(\P^1)^{2}$. 

On the other hand, the sequence $d^{-n} (F^n)^*(\omega_2)$ is bounded in mass hence we can extract a converging subsequence. Its limit is a positive closed current cohomologuous to $\omega_1$ hence it can be written as $\pi^*(\nu')$ where $\nu'$ is some probability measure in $\P^1$. Extracting again, we can assume that $(\varphi_n)$  converges in $L^1$ (it is bounded in DSH). Its limit $V$ satisfies $dd^c V= (\pi_1)^*(\nu'-\mu_f)$ hence it is constant on each fiber of $\pi_1$. In other words, $V=\pi_1^*(v)$ for some dsh function $v$ on $\P^1$. Take $z \in \C$ in  the interior of the filled Julia set. Then, the sequences $(f^n(z))$ and $(f^n)'(z)$ are equicontinuous near $z$, hence bounded. In particular, for any $u\in \C$, we have that the sequence $(f^n(z), (f^n)'(z)\cdot u)$ stays in a some compact subset of $\C \times \C$ (this compact set can even be chosen to be uniform in a neighborhood of $(z,u)$). In particular, the sequence $\varphi_n(z,u)$ converges to $0$. Similarly, for $[z:t]$ in the basin of attraction of $\infty$ and any $[u:v]$ with $u\neq 0$, we have that the sequence $([f^n(z:t): t^{d^n}], [(f^n)'(z:t)u: t^{d^n-1}v] )$ converges to $([1:0],[1:0])$. Hence, $\varphi_n ([z:t],[u:v])$ converges to $0$ for such $([z:t],[u:v])$.

It follows that the function $v$ is equal to $0$ in the Fatou set. Now, the Julia set has empty interior and a dsh function that is $0$ outside such set is identically $0$ by pluri-fine continuity. It follows that $\nu'=\mu_f$ which ends the proof.
\end{proof}
Any PB probability measure $\nu$ in $\P^1$ can be written as $\omega_{\P^1}+ dd^c \eta$ where $\eta$ is bounded in $\P^1$ so we have the following:

\begin{corollary}\label{cor:PB}
For any PB probability measure $\nu$ in $\P^1$, the sequence of currents $d^{-n} (F^n)^*(\pi_2^*(\nu))$ converges to $T_F$ in the sense of currents.
\end{corollary}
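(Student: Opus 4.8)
The plan is to reduce the statement to Lemma~\ref{lm:cvGreen} by exploiting the $dd^c$-decomposition of $\nu$ together with the algebraic stability of $F$. First I would write $\pi_2^*(\nu) = \omega_2 + dd^c(\pi_2^*\eta)$, where $\eta$ is the bounded potential of $\nu$ on $\P^1$; the pullback $\pi_2^*\eta$ is then a bounded dsh function on $(\P^1)^2$. Applying $(F^n)^*$ and using that $F$ is algebraically stable (so that pullback commutes with $dd^c$ in the appropriate sense), one gets
\[
d^{-n}(F^n)^*(\pi_2^*\nu) = d^{-n}(F^n)^*\omega_2 + dd^c\left(d^{-n}\,\pi_2^*\eta\circ F^n\right).
\]
By Lemma~\ref{lm:cvGreen} the first term converges to $T_F$, so it suffices to show the second term converges to $0$ in the sense of currents, i.e. that $d^{-n}\,\pi_2^*\eta\circ F^n \to 0$ in $L^1$ (or at least weakly as a dsh function).

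For the error term I would argue exactly as in the proof of Lemma~\ref{lm:cvGreen}: the functions $\psi_n := d^{-n}\,\eta\circ(\pi_2\circ F^n)$ are dsh, and since $\eta$ is \emph{bounded} on all of $\P^1$, we have the trivial uniform bound $\|\psi_n\|_\infty \leq d^{-n}\|\eta\|_\infty \to 0$. This is even simpler than the situation in Lemma~\ref{lm:cvGreen}, where one only had boundedness near the attracting point $I'$ and had to cite the equivalence of $DSH$-norms to propagate an $L^1$ bound; here the sup-norm itself tends to $0$, so $\|\psi_n\|_{\nu_W}\to 0$ for any PB measure $\nu_W$, hence $\psi_n\to 0$ in $L^1$ for Fubini--Study by equivalence of $DSH$-norms, and therefore $dd^c\psi_n\to 0$ in the sense of currents. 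Combining this with Lemma~\ref{lm:cvGreen} gives the claimed convergence.

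The only genuine subtlety — and the step I would treat most carefully — is the commutation $(F^n)^*(dd^c \pi_2^*\eta) = dd^c((F^n)^*\pi_2^*\eta)$ and, more precisely, the identification $(F^n)^*(\pi_2^*\eta) = \pi_2^*\eta\circ F^n = d^n$-rescaled version of what appears above: one must check that pulling back the bounded dsh function $\pi_2^*\eta$ by the meromorphic map $F^n$ does not create mass on the indeterminacy set $I(F^n)$, so that no spurious closed positive current is introduced. This is where algebraic stability of $F$ is used: it guarantees $(F^n)^*\omega_2$ has the expected cohomology class and that the potential-theoretic identity holds across $I(F^n)$ (the indeterminacy points are isolated, hence pluripolar, and a bounded dsh function extends across them). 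Once this bookkeeping is in place, the rest is immediate from the preceding lemma.
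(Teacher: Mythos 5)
Your proof is correct and is essentially the argument the paper intends: the paper's entire justification is the one-line remark that $\nu=\omega_{\P^1}+dd^c\eta$ with $\eta$ bounded, and you have simply spelled out the resulting decomposition $d^{-n}(F^n)^*(\pi_2^*\nu)=d^{-n}(F^n)^*\omega_2+dd^c\bigl(d^{-n}\,\pi_2^*\eta\circ F^n\bigr)$, invoked Lemma~\ref{lm:cvGreen} for the first term, and killed the second by the uniform bound $d^{-n}\|\eta\|_\infty\to0$. The care you take with the pullback of the bounded dsh potential across $I(F^n)$ is appropriate but standard, and nothing further is needed.
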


\subsection{The convergence Theorem}

For $u_0 \in \C$, we denote by $[u=u_0]$ the current of integration on the line $\P^1\times \{u_0\}$ of $\P^1\times\P^1$.
\begin{proposition}\label{for_u_0}
For any $u_0 \in \C^*$, the sequence of currents $d^{-n} (F^n)^*[u=u_0]$ converges to $T_F$ in the sense of currents.
\end{proposition}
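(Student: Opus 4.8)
The plan is to realise $[u=u_0]$ as $\omega_2+dd^c\psi$ for a quasi-psh potential $\psi$, so that the statement reduces, via Lemma~\ref{lm:cvGreen}, to showing that $d^{-n}$ times the $n$-th pull-back of $\psi$ tends to $0$. Concretely, let $g_{u_0}$ denote the $\omega_{\P^1}$-Green function of $\P^1$ with pole at $u_0$, i.e.\ an $\omega_{\P^1}$-psh function with $dd^c g_{u_0}=\delta_{u_0}-\omega_{\P^1}$; it is bounded above, has an integrable logarithmic pole at $u_0$, and — this is precisely where $u_0\in\C^*$ enters — is bounded near $0$ and near $\infty$. Put $\psi:=\pi_2^*g_{u_0}$, a quasi-psh function on $(\P^1)^2$ with $[u=u_0]=\pi_2^*\delta_{u_0}=\omega_2+dd^c\psi$. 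Writing $h_n:=\pi_2\circ F^n\colon (z,u)\mapsto (f^n)'(z)\cdot u$ and using algebraic stability of $F$, one gets
\[
d^{-n}(F^n)^*[u=u_0]\;=\;d^{-n}(F^n)^*\omega_2\;+\;dd^c\!\left(d^{-n}\psi_n\right),\qquad \psi_n:=g_{u_0}\circ h_n .
\]
By Lemma~\ref{lm:cvGreen} the first term tends to $T_F$, so everything comes down to proving $d^{-n}\psi_n\to 0$ in $L^1$.

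First I would establish a crude global bound. One has $\psi_n\le C$ since $g_{u_0}$ is bounded above, and $\|\psi_n\|_{L^1(\omega_1\wedge\omega_2)}=O(d^n)$: fixing $z$ and substituting $w=(f^n)'(z)u$, the behaviour of $g_{u_0}$ near $0$, $\infty$ and $u_0$ (using $u_0\neq 0$) gives $\int_{\P^1}|g_{u_0}((f^n)'(z)u)|\,\omega_{\P^1}(u)\le C(u_0)\bigl(1+|\log|(f^n)'(z)||\bigr)$, and then $\int_{\P^1}|\log|(f^n)'(z)||\,\omega_{\P^1}(z)=O(d^n)$ by the classical Nevanlinna--Jensen estimate for the polynomial $(f^n)'$ of degree $d^n-1$ (whose leading coefficient has $\log^{\pm}$ of size $O(d^n)$). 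Hence $d^{-n}\psi_n$ is bounded in $L^1$; moreover $dd^c\psi_n=h_n^*\delta_{u_0}-(F^n)^*\omega_2$ is a difference of positive closed currents of mass $O(d^n)$, so $\|d^{-n}\psi_n\|_{DSH}=O(1)$. Extracting subsequences, we may assume $d^{-n}(F^n)^*[u=u_0]\to S$ and $d^{-n}\psi_n\to V$ in $L^1$, with $S=T_F+dd^cV$. As $S$ is positive closed of class $\{\omega_1\}$, it equals $\pi_1^*\rho$ for a probability measure $\rho$ on $\P^1$ (same argument as for the Green current), so $dd^cV=\pi_1^*(\rho-\mu_f)$ is pulled back from the base; slicing $V$ along the fibres of $\pi_1$ and using that a dsh function on $\P^1$ with vanishing $dd^c$ is constant, one gets $V=\pi_1^*v$ with $v$ dsh on $\P^1$ and $dd^cv=\rho-\mu_f$.

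It remains to prove $v\equiv 0$, and this is the only genuinely new point. I would show that $d^{-n}\psi_n\to 0$ in $L^1_{\mathrm{loc}}$ over $\Omega\times\P^1$, where $\Omega$ is the Fatou set of $f$. The key is that $|(f^n)'(z)|$ is controlled on $\Omega$, locally uniformly in $n$: on a bounded Fatou component — attracting, parabolic or Siegel (there are no Herman rings for polynomials) — it stays in a compact subset of $(0,+\infty)$; on the basin of $\infty$ it tends to $+\infty$ (indeed $d^{-n}\log|(f^n)'(z)|\to g_f(z)$ there); and near $\{z=\infty\}$ one is in the basin of the attracting point $I'$, where everything is bounded. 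In each case, because $u_0\in\C^*$ makes $g_{u_0}$ harmless away from $u_0$, the change-of-variables estimate above upgrades to a bound on $\int_{K'\times\{|u|\le R\}}|\psi_n|$ that is \emph{uniform in $n$} (not merely $O(d^n)$), whence $d^{-n}\psi_n\to 0$ there. Therefore $v=0$ on $\Omega$, and since $\Omega$ is open and dense (the Julia set has empty interior) while $v$ is pluri-finely continuous, $v\equiv 0$, so $S=T_F$. As the limit does not depend on the extracted subsequence, $d^{-n}(F^n)^*[u=u_0]\to T_F$, as claimed. The main obstacle is exactly this last step: one is tempted to prove ``$d^{-n}\psi_n\to 0$ pointwise on the Fatou set'', which runs into small-divisor problems on Siegel disks (there $(f^n)'(z)u$ may approach $u_0$), and the point is that only an $n$-uniform $L^1_{\mathrm{loc}}$ bound is actually needed.
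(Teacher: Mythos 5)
Your proposal is correct, but it takes a genuinely different route from the paper. The paper's proof of Proposition~\ref{for_u_0} is a short symmetry argument: from Corollary~\ref{cor:PB} one already knows the convergence holds for $u_0$ outside a polar set, and then one observes that the dilation $D_\lambda:(z,u)\mapsto(z,\lambda u)$ commutes with $F$, satisfies $D_\lambda^*[u=u_0]=[u=u_0/\lambda]$, and fixes $T_F=\pi_1^*\mu_f$; conjugating by $D_\lambda$ transports the convergence from one good $u_0$ to every $u_1\in\C^*$. Your proof instead attacks the Dirac mass directly: you write $[u=u_0]=\omega_2+dd^c\pi_2^*g_{u_0}$ and essentially rerun the argument of Lemma~\ref{lm:cvGreen} with the singular quasi-potential $g_{u_0}$ in place of the smooth $\varphi$, supplying Jensen-type estimates to restore the DSH bound, and then repeating the slicing, Fatou-set, and pluri-fine-continuity steps. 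Both are sound; the paper's is by far the shorter path, while yours is self-contained and more ``hands-on'' (at the cost of redoing the hardest step of Lemma~\ref{lm:cvGreen}).

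Two remarks worth making. First, the phrase ``on a bounded Fatou component — attracting, parabolic or Siegel — $|(f^n)'(z)|$ stays in a compact subset of $(0,+\infty)$'' is not accurate: on attracting and parabolic basins $|(f^n)'(z)|\to0$ locally uniformly. The conclusion you draw is still fine precisely because $u_0\neq0$ makes $g_{u_0}$ bounded near $0$, so $(f^n)'(z)u$ drifting toward $0$ is harmless; it is only the upper bound on $|(f^n)'(z)|$ on bounded components (and the absence of Herman rings) that matters, together with the Siegel-disk case where your $L^1$-not-pointwise caveat is indeed the essential observation. Second, your global estimate $\|\psi_n\|_{L^1}=O(d^n)$ is far from sharp: in fact $\int_{\P^1}|g_{u_0}(\alpha u)|\,\omega_{\P^1}(u)$ is bounded \emph{uniformly} in $\alpha\in\C^*$ (the log-poles of $g_{u_0}$ at $u_0$ and at $\infty$ cancel in the quasi-potential, and the pushed-forward Fubini--Study density stays uniformly bounded near $u_0$), because $u_0\neq0,\infty$. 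This gives $\|\psi_n\|_{L^1((\P^1)^2)}=O(1)$, hence $d^{-n}\psi_n\to0$ in $L^1$ outright, hence $d^{-n}dd^c\psi_n\to0$ as currents, and the proposition follows from Lemma~\ref{lm:cvGreen} with no need for DSH compactness, slicing, the Fatou-set discussion, or pluri-fine continuity at all. Noticing this would have collapsed your argument to a few lines and, unlike the paper's route, would also avoid appealing to the polar exceptional set of Corollary~\ref{cor:PB}.
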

\begin{proof} By the above Corollary~\ref{cor:PB}, one deduces that outside a polar set of $u_0 \in \C$, $d^{-n} (F^n)^*([u=u_0])$ converges to $T_F$ in the sense of currents. Choose such a $u_0 \neq 0$. For $\lambda \in \C^*$, consider the map $D_\lambda: (z,u) \mapsto (z, \lambda u)$. Observe that 
$$d^{-n} (F^n)^* ( [u=\frac {1}{\lambda} u_0])= D_\lambda^*(d^{-n} (F^n)^* ( [u=u_0])).$$ 
Now, $D_\lambda^*(d^{-n} (F^n)^* ( [u=u_0]))$ converges to $D_\lambda^*(T_F)$ in the sense of currents by assumption and continuity of $D_\lambda^*$. Since $T_F= \pi_1^*(\nu)$, we have $D_\lambda^*(T_F)=T_F$. The result follows since taking a $u_1 \in \C^*$ arbitrary, we can write $ \lambda u_1 = u_0$ for a suitable $\lambda$.
\end{proof}

We can now prove the first part of Theorem~\ref{equi_poly}:
\begin{theorem}\label{equi_poly1}
Let $\nu$ be a probability PB measure in $\C$, then 
\[\lim_{n\to \infty} \frac{1}{d^n}((f^n)')^*\nu = \mu_f~,\]
weakly on $\C$. In particular, outside a polar set of $\lambda \in \C$, we have $\lim\limits_{n\rightarrow\infty}\nu_n^\lambda  = \mu_f$.
\end{theorem}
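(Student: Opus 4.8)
The plan is to realise both $\frac{1}{d^n}((f^n)')^*\nu$ and each $\nu_n^\lambda$ as the slice, along a horizontal line $\{u=\mathrm{const}\}$ in $\P^1\times\P^1$, of a current whose convergence is already known, and then to cope with the discontinuity of slicing. First I would reduce the first assertion to $\nu=\omega_{\P^1}$: writing $\nu=\omega_{\P^1}+dd^c\eta$ with $\eta$ bounded on $\P^1$ (subsection~\ref{basics_dsh}) and using that $(f^n)'$ is holomorphic and non-constant, one has $((f^n)')^*\nu=((f^n)')^*\omega_{\P^1}+dd^c(\eta\circ(f^n)')$ with $\|d^{-n}\,\eta\circ(f^n)'\|_\infty\le d^{-n}\|\eta\|_\infty\to0$, so it suffices to treat $\omega_{\P^1}$.

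Next I would set up the slices. For $\lambda\in\C^*$ let $L_\lambda:=\P^1\times\{\lambda\}$, identified with $\P^1$ via $\pi_1$. Since $F^n(z,u)=(f^n(z),(f^n)'(z)u)$, the restriction of $\pi_2\circ F^n$ to $L_\lambda$ is $z\mapsto\lambda(f^n)'(z)$; hence the slice of $(F^n)^*\omega_2$ along $L_\lambda$ is $((f^n)')^*(m_\lambda^*\omega_{\P^1})$ with $m_\lambda(w)=\lambda w$ — for $\lambda=1$ this is $((f^n)')^*\omega_{\P^1}$ — and the slice of the curve $(F^n)^*[u=1]$ along $L_\lambda$ is $((f^n)')^*\delta_{1/\lambda}$, which equals $(d^n-1)\,\nu_n^{1/\lambda}$ under the identification $L_\lambda\cong\P^1$. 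On the chart $\C_z$ these slices have the explicit subharmonic potentials $P_n^\lambda(z)=\frac{1}{2d^n}\log\!\big(1+|\lambda|^2|(f^n)'(z)|^2\big)$, respectively $\frac{1}{d^n}\log|(f^n)'(z)-1/\lambda|$. Now $\frac{1}{d^n}(F^n)^*\omega_2$ and $\frac{1}{d^n}(F^n)^*[u=1]$ both converge to $T_F=\pi_1^*\mu_f$ (Lemma~\ref{lm:cvGreen}, Proposition~\ref{for_u_0}), whose slice along every $L_\lambda$ is $\mu_f$; writing the potentials of these currents on $\P^1\times\P^1$ and applying Fubini to their $L^1$-convergence yields, for Lebesgue-almost every $\lambda$, convergence of the approximating slices to $\mu_f$, i.e.\ $\frac{1}{d^n}((f^n)')^*(m_\lambda^*\omega_{\P^1})\to\mu_f$ and $\nu_n^{1/\lambda}\to\mu_f$ for a.e.\ $\lambda$.

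It remains to remove the genericity of $\lambda$. For the first assertion, an elementary estimate gives $|P_n^\lambda-P_n^1|\le d^{-n}|\log|\lambda||$ uniformly in $z$ and $n$, so $dd^cP_n^1=\frac{1}{d^n}((f^n)')^*\omega_{\P^1}$ and $dd^cP_n^\lambda=\frac{1}{d^n}((f^n)')^*(m_\lambda^*\omega_{\P^1})$ have the same cluster currents; combining one good $\lambda$ with the full-sequence convergence of Lemma~\ref{lm:cvGreen} (a routine subsequence argument) yields $\frac{1}{d^n}((f^n)')^*\omega_{\P^1}\to\mu_f$, hence the first assertion. For the second, the slices $\frac{1}{d^n}(F^n)^*[u=1]\wedge[u=\lambda]=\frac{d^n-1}{d^n}\nu_n^{1/\lambda}$ of the converging sequence $\frac{1}{d^n}(F^n)^*[u=1]\to T_F$ — a sequence of positive closed $(1,1)$-currents with uniformly $DSH$-bounded potentials — converge to $T_F\wedge[u=\lambda]=\mu_f$ for $\lambda$ outside a polar set, which gives $\nu_n^\lambda\to\mu_f$ off a polar set because $\lambda\mapsto1/\lambda$ preserves polar sets; this polar set genuinely contains $0$ when $f$ is affine-conjugate to $z^d$. (Alternatively the second assertion follows from the first by integrating $\frac{1}{d^n}((f^n)')^*\delta_\lambda$ against PB measures and using that the functions $(z,\lambda)\mapsto d^{-n}\log|(f^n)'(z)-\lambda|$ are jointly plurisubharmonic while PB measures charge every non-pluripolar set.)

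The hard part is precisely the passage from convergence of currents on $\P^1\times\P^1$ to convergence of their slices along a \emph{prescribed}, non-generic line: slicing is only semicontinuous, and this is what forces the two extra ingredients above — the uniform control of the variation of $\frac12\log(1+|\lambda w|^2)$ with $\lambda$, which makes $\{u=1\}$ as good as a generic line for the smooth (hence PB) statement, and the standard fact that slices of a $DSH$-bounded converging sequence of positive closed $(1,1)$-currents converge off a polar set of parameters. Everything else — the reduction to $\omega_{\P^1}$, the identification of the slices, the cohomology bookkeeping — is routine.
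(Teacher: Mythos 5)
Your proposal is correct, and for the first assertion it takes a genuinely different route from the paper. The paper proves part (1) by writing the wedge-product identity
\[(F^n)^*(\pi_2^*\nu)\wedge[u=1]=(F^n)^*[u=1]\wedge\pi_2^*(\mathrm{inv}^*\nu)~,\]
taking $\nu=\omega_{\P^1}$ (which is $\mathrm{inv}$-invariant), invoking Proposition~\ref{for_u_0} for $d^{-n}(F^n)^*[u=1]\to T_F$, then using continuity of the wedge product against $\pi_2^*\omega_{\P^1}$ (continuous potentials) and pushing forward by $(\pi_1)_*$. You instead bypass Proposition~\ref{for_u_0} entirely for part (1), working only from Lemma~\ref{lm:cvGreen}: you slice $d^{-n}(F^n)^*\omega_2$ along the horizontal lines $L_\lambda$, apply Fubini to the $L^1$ convergence of the dsh potentials to obtain slice-convergence for a.e.\ $\lambda$ along a subsequence, and then pass to the prescribed slice $\lambda=1$ via the uniform bound $|P_n^\lambda-P_n^1|\le d^{-n}|\log|\lambda||$; this last estimate, which quantifies that all horizontal slices of $\omega_2$ are equivalent up to $O(d^{-n})$, is a nice observation not used in the paper and makes the ``non-generic slice'' issue disappear cleanly. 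For part (2), your main route (slicing the converging sequence $d^{-n}(F^n)^*[u=1]$) and your parenthetical alternative (integrating against PB measures and using joint plurisubharmonicity of $(z,\lambda)\mapsto d^{-n}\log|(f^n)'(z)-\lambda|$) are both at roughly the same level of rigor as the paper's own one-line argument (``the exceptional set has zero measure for all PB measures, hence is polar''): all three appeal to the same pluripotential-theoretic mechanism without spelling it out, and the ``standard fact'' about slices of a DSH-bounded converging sequence you cite is precisely what that mechanism establishes, so there is no gap beyond what the paper itself leaves implicit. Two small points worth making explicit in a write-up: the subsequence bookkeeping in the Fubini step (Fubini gives a.e.\ convergence only along a subsequence, so one needs the compactness of the space of probability measures on $\P^1$ to pass to the full sequence), and a reference for the slicing fact used for part (2).
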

\begin{proof} 
Let $u_0 \in \C^*$, then observe that $(F^n)^*([u=u_0]) \wedge [u=1] = (F^n)^*([u=1]) \wedge [u=u_0^{-1}]$. Let $\mathrm{inv}:\P^1\to \P^1$ be the rational map $z \mapsto  z^{-1}$. In other words,  
\begin{equation}\label{inv}
  (F^n)^*([u=u_0]) \wedge [u=1] = (F^n)^*([u=1]) \wedge  \pi_2^* (\mathrm{inv}^*([u=u_0])). 
  \end{equation}
By Fubini, we have for $\nu$ 
\[  (F^n)^*(\pi_2^*(\nu)) \wedge [u=1] = (F^n)^*([u=1]) \wedge \pi_2^*( \mathrm{inv}^*(\nu)). \]   
As remarked earlier, proving the statement for one specific PB measure $\nu$ implies the result for all PB measures. In particular, we can take $\nu= \omega_{\P^1}$ .  
By the above Proposition~\ref{for_u_0}, $d^{-n} (F^n)^*([u=1]) \to T_F$ in the sense of currents, hence  $d^{-n} (F^n)^*([u=u_0])\wedge \pi_2^*( \mathrm{inv}^*(\omega_{\P^1} ))\to T_F\wedge \pi_2^*( \mathrm{inv}^*(\omega_{\P^1}))$ since $\pi_2^*( \mathrm{inv}^*(\omega_{\P^1} ))$ has continuous potentials. By continuity of $(\pi_1)_*$, we deduce that 
\[(\pi_1)_*\left( d^{-n} (F^n)^*([u=u_0])\wedge \pi_2^*( \mathrm{inv}^*(\nu)) \right) \to (\pi_1)_*\left( T_F\wedge \pi_2^*( \mathrm{inv}^*(\nu)) \right)= \mu_f,\]
in the sense of currents. Using \eqref{inv} gives the first part of the theorem.

The second part follows since the set of $\lambda$ for which
\[\lim_{n\to \infty} \frac{1}{d^n} \sum_{ (f^n)'(z)= \lambda } \delta_z  = \mu _f\] 
does not hold has zero measure for all the PB measure of $\C$. Such a set is polar.
\end{proof}

\begin{remark}\label{remark_equidistibution1} \normalfont \begin{enumerate}
\item Such a proof has the advantage to be both intuitive and geometric. It may easily be adapted to rational maps of $\P^1$ (some new arguments are needed for maps whose Julia set is the whole $\P^1$) or, as we will see in the next section, to parameter spaces of any dimension. 
\item Nevertheless, it does not give the equidistribution for all $\lambda$ but $0$ without further arguments. This should be possible using the recent theory of Dinh and Sibony of density of currents \cite{DS14}. Indeed, let $G(\mathrm{Tan}(\C),1)$ be the Grassmanian of $1$-plan in $\mathrm{Tan}(\C)$ and let us work in the birational model $(\P^1)^3$. We consider the coordinates $([z:t],[u:v],[a:b])$. For an hypersurface $A$ in $(\P^1)^2$, we let $\widehat{A}$ denote the incidence variety associated to $A$ in $(\P^1)^3$. In particular (working in the chart $\C^2$), for $ (F^n)^*\{u=\lambda\} $, we have the cartesian equation of $\widehat{ (F^n)^*\{u=\lambda\}}$:
\[(f^n)'(z) u = \lambda \  \mathrm{and}\  (f^n)''(z) u a + (f^n)'(z)b = 0 .  \]  
 Then observe that $[\widehat{ (F^n)^*\{u= \lambda\}} ]$ is cohomologous (or at least bounded in cohomology) to:
 $$ 2(d^n-1)\omega_1\wedge \omega_2 + (d^n-1)\omega_1\wedge \omega_3 +\omega_2\wedge \omega_3   $$
 (for the coefficient before $\omega_1\wedge \omega_2$, choose a generic $[a_0:b_0]$ and count the number of solutions of $(f^n)'(z) u =\lambda , \  (f^n)''(z) u a_0 + (f^n)'(z)b_0 = 0$, for that, replace $u$ by  $\lambda((f^n)'(z))^{-1}$ in the second equation). In particular, consider the sequence of positive closed currents (of bidegree $(2,2)$)  :
 $$\frac{1}{d^n}  \left[\widehat{ (F^n)^*\{u=\lambda\}}\right].$$
 We can extract a subsequence which converges to a limit $\widehat{T}$. 
 Then, proceeding as in \cite{DS15}, the result would follow for $\lambda$ provided that:
\[\widehat{T}\curlywedge  \widehat{ [u = 1] } = 0 \]
  where $\widehat{ [u = 1] }$ is the line given by  $u=1$, $b=0$ in $(\P^1)^3$. We were unable to prove this though heuristc arguments show that   $\widehat{T}$ should be $ T_F \wedge [a=0] + 2 \pi_1^*(\mu_f)\wedge [u=0]$. 
\end{enumerate}
\end{remark}

\subsection{Complement 1: Brolin's approach}\label{sec:brolin}
In the rest of the section, we want to present two distinct alternative proofs, giving the second and third items of Theorem~\ref{equi_poly}. The first one is an adaptation of the classical proof of Brolin concerning the distribution of preimages. The second one is an adaptation of the proof of Lyubich concerning the distribution of preimages, which consists in building sufficiently many inverse branches.

~

In this subsection, we assume that $f$ has no Siegel disk. Up to conjugating by a linear map, we can assume that $f$ is unitary. In order to simplify the arguments, we shall also assume that no critical points are in the basin of attraction of infinity (this assumption is only technical). Let $\lambda \in \C^*$, recall that we denoted: 
\[\nu_n^\lambda := \frac{1}{d^n-1} \sum_{(f^n)'(z)=\lambda} \delta_z \]
counting the multiplicity in the sum. We shall prove in this section:
\begin{theorem}\label{equi_poly2}
Let $f$ be a degree $d\geq 2$ polynomial satisfying the above properties and pick any $\lambda \in \C^*$. Then the sequence $(\nu_n^\lambda)_n$ converges towards $\mu_f$ in the sense of measures. 
\end{theorem}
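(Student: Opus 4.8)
The plan is to run a potential-theoretic argument in the spirit of Brolin. Since $f$ is monic, $(f^n)'$ is a polynomial of degree $d^n-1$ with leading coefficient $d^n$, so $\nu_n^\lambda$ is a probability measure with subharmonic potential
\[u_n:=\frac{1}{d^n-1}\log\bigl|(f^n)'(\cdot)-\lambda\bigr|,\qquad dd^c u_n=\nu_n^\lambda.\]
As $u_n(z)=\log|z|+o(1)$ near $\infty$ and $dd^cg_f=\mu_f$, it suffices to prove $u_n\to g_f$ in $L^1_{loc}(\C)$. I would get this by showing (i) $u_n\to g_f$ locally uniformly on $\C\setminus J_f$, (ii) $(u_n)$ is locally uniformly bounded above, and (iii) every $L^1_{loc}$ cluster value of $(u_n)$ equals $g_f$.

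For (i), split $\C\setminus J_f$ into the basin of infinity and the bounded Fatou components. On $\C\setminus K_f$, the chain rule $(f^n)'(z)=\prod_{k<n}f'(f^k(z))$ together with $f'(w)=dw^{d-1}(1+o(1))$ and $\log|w|=g_f(w)+o(1)$ at $\infty$ gives $\log|(f^n)'(z)|=(d^n-1)g_f(z)+O(n)$ uniformly on compact subsets (here the assumption that no critical point escapes ensures $f'(f^k(z))\neq 0$ along the orbit); since moreover $|(f^n)'(z)|\to\infty$ there, the shift by $-\lambda$ is negligible and $u_n\to g_f$ uniformly on compacts. On the interior of $K_f$ I would invoke Sullivan's no-wandering-domains theorem and the classification of periodic Fatou components: under the no-Siegel-disk hypothesis (and since polynomials have no Herman rings or Baker domains) every bounded Fatou component is eventually an attracting or parabolic basin, and on each of them $(f^n)'\to 0$ locally uniformly — via Koenigs' linearization in the attracting case and the Fatou-coordinate identity $(f^n)'(z)=\varphi'(z)/\varphi'(f^n(z))\to 0$ in the parabolic case. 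Hence $(f^n)'-\lambda\to-\lambda\neq 0$ locally uniformly and $u_n\to 0=g_f$ there.

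For (ii), forward invariance of $J_f$ gives $|(f^n)'(z)|\le M^n$ on $J_f$ with $M=\max_{J_f}|f'|$, so $\limsup_n u_n\le 0=g_f$ on $J_f$; with (i) this yields $\limsup_n u_n\le g_f$ everywhere, and a maximum-principle argument on a large disk containing $K_f$ shows $(u_n)$ is locally uniformly bounded above (and does not tend to $-\infty$, since it converges to $g_f\not\equiv-\infty$ on the basin of infinity). By the standard compactness of such families of subharmonic functions, any subsequence of $(u_n)$ has a further subsequence converging in $L^1_{loc}$ to a subharmonic $u_\infty$. Then $u_\infty\le g_f$ (upper regularization of $\limsup_n u_n$), while $u_\infty=g_f$ on the open dense set $\C\setminus J_f$ (recall $J_f$ has empty interior for a polynomial). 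Therefore $w:=g_f-u_\infty$ is lower semicontinuous, nonnegative, and vanishes on a dense open set, so each $\{w>c\}$ is open and contained in $J_f$, hence empty; thus $w\equiv 0$ and $u_\infty=g_f$. As every cluster value is $g_f$, we conclude $u_n\to g_f$ in $L^1_{loc}$ and $\nu_n^\lambda\to\mu_f$.

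The main obstacle is step (i) on the bounded Fatou components: one must establish that, away from Siegel disks, $(f^n)'\to 0$ locally uniformly on each periodic Fatou component, which requires combining Sullivan's theorem, the classification of periodic components for polynomials, and the local normal forms near attracting and parabolic cycles. It is precisely here that excluding Siegel disks is essential: on a Siegel disk $(f^n)'$ stays bounded away from $0$ and can meet a prescribed $\lambda\in\C^*$, which is exactly the mechanism producing the exceptional set in the general statement.
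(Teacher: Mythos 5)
Your argument is correct, and it follows essentially the same potential-theoretic, Brolin-style strategy as the paper's proof of Theorem~\ref{equi_poly2}, under the same standing hypotheses ($f$ monic, no Siegel disk, no escaping critical point). The decisive bound is identical in both proofs: $\limsup_n u_n\le 0$ on the forward-invariant compact $J_f$, obtained from $|(f^n)'|\le M^n$ there. The difference is organizational rather than conceptual: the paper first checks (via the same Fatou-set analysis as your step~(i)) that any cluster value of $\nu_n^\lambda$ is supported on $J_f$, and then invokes Brolin's Lemma~15.5 as a black box to get extremality of the equilibrium measure, whereas you re-derive that comparison statement by establishing locally uniform convergence $u_n\to g_f$ on $\C\setminus J_f$ and closing with a lower-semicontinuity/dense-open argument using that $J_f$ has empty interior. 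Your version is more self-contained (it does not rely on Brolin's lemma) and a bit more quantitative on the escaping basin, at the cost of being longer. One small simplification: on the bounded Fatou components, once $(f^n)$ converges locally uniformly to a constant the Cauchy estimates already give $(f^n)'\to 0$ locally uniformly, so the explicit appeals to Koenigs linearization and Fatou coordinates can be dropped.
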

Observe that $(\nu_n^\lambda)$ is a sequence of probability measures so we can extract a converging subsequence towards a limit $\mu'$. We shall show that $\mu'=\mu_f$.
\begin{lemma}
The support of $\mu'$ is contained in the Julia set $J_f$ of $f$.
\end{lemma}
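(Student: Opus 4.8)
The plan is to prove that $\mu'$ gives no mass to the Fatou set $F_f:=\C\setminus J_f$ of $f$, which is exactly the asserted inclusion. Since $\nu^\lambda_{n_k}\to\mu'$ along a subsequence and mass is lower semicontinuous on open sets, it suffices to establish the following local statement: \emph{every $z_0\in F_f$ has an open neighbourhood $V\subset F_f$ such that $(f^n)'(z)\neq\lambda$ for all $z\in V$ and all $n$ large enough.} Indeed, this forces $\nu^\lambda_n(V)=0$ for $n\gg1$, hence $\mu'(V)\le\liminf_k\nu^\lambda_{n_k}(V)=0$, so $z_0\notin\supp\mu'$.

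To obtain the local statement I would analyse the asymptotics of $(f^n)'$ on each Fatou component. By Sullivan's no wandering domain theorem every component of $F_f$ is preperiodic, and the chain rule $(f^n)'(z)=(f^{n-k})'\!\bigl(f^k(z)\bigr)\cdot(f^k)'(z)$ reduces the preperiodic case to the periodic one; note that, $U_\infty$ being completely invariant, no component other than $U_\infty$ is eventually mapped into $U_\infty$, so the periodic targets to consider are all bounded. As $f$ is a polynomial it has no Herman ring, and by hypothesis it has no Siegel disk, so the periodic Fatou components are exactly the basin $U_\infty$ of infinity, the (super)attracting basins, and the parabolic basins. On a (super)attracting basin of a cycle of period $p$, writing $n=pm+r$ with $0\le r<p$ one has $f^{pm}\to a$ and $(f^{pm})'\to 0$ locally uniformly (where $a$ is the attracting fixed point of $f^p$ on that component), so $(f^n)'=(f^r)'\!\bigl(f^{pm}(\cdot)\bigr)\,(f^{pm})'\to 0$ locally uniformly, uniformly over the finitely many residues $r$; hence $(f^n)'$ avoids $\lambda\in\C^*$ on compacta for $n$ large. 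On a parabolic basin of period $p$, after replacing $f^p$ by a suitable further iterate we may assume the multiplier equals $1$; using the Leau--Fatou flower and the Fatou coordinate $\phi$ on an attracting petal $P$ (which every compact subset of the basin eventually enters under $f^{pm}$), the relation $\phi\circ f^p=\phi+1$ gives $(f^{pm})'(w)=\phi'(w)/\phi'\!\bigl(f^{pm}(w)\bigr)$, and since $f^{pm}(w)$ tends to the parabolic point, near which $\phi'$ is unbounded, we again get $(f^{pm})'\to0$, hence $(f^n)'\to0$ locally uniformly. Finally, on $U_\infty$ I would use the B\"ottcher coordinate $\varphi$ near $\infty$ ($\varphi\circ f=\varphi^d$): as $f$ has no escaping critical point, $(f^n)'$ is nowhere zero on $U_\infty$, and differentiating $\varphi\circ f^n=\varphi^{d^n}$ shows $|(f^n)'|\to\infty$ uniformly on compact subsets of $U_\infty$, so again $(f^n)'\neq\lambda$ there for $n$ large.

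The main obstacle is the basin of infinity: one must see that $f^n\to\infty$ forces the derivatives to \emph{blow up} rather than merely stay off $\lambda$, and it is precisely here that the standing assumption ``no escaping critical point'' is convenient, since it keeps $(f^n)'$ zero-free on $U_\infty$ and thus lets the B\"ottcher estimate apply without having to track cancellations coming from critical preimages. The parabolic basins form the second delicate point, since there $(f^n)'$ does not obviously decay and one has to invoke the local normal form of parabolic germs to see that it does.
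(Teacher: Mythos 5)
Your proof is correct and follows the same route as the paper's: decompose the Fatou set, invoke the classification of periodic Fatou components (Sullivan's no-wandering-domains theorem, plus no Siegel disk by hypothesis and no Herman ring for a polynomial), and show that $(f^n)'\to0$ locally uniformly on the bounded components while $|(f^n)'|\to\infty$ on the basin of infinity, so that $(f^n)'$ eventually omits $\lambda\in\C^*$ on every compact subset of the Fatou set. The paper compresses all of this into ``$(f^n)$ is normal and converges uniformly towards a constant, so $(f^n)'$ converges uniformly towards $0$,'' and then treats $U_\infty$ with the same B\"ottcher-type observation you use; your write-up makes explicit what the paper leaves implicit (the reduction of preperiodic to periodic components, complete invariance of $U_\infty$, the zero-free-ness of $(f^n)'$ there). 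The one place where you invoke strictly more machinery than necessary is the parabolic basins: since $f^{pm}\to a$ locally uniformly on such a basin (a standard fact), the Weierstrass/Cauchy estimate already gives $(f^{pm})'\to 0$ locally uniformly exactly as in the attracting case, with no need for the Leau--Fatou flower or Fatou coordinates; this uniform treatment of attracting and parabolic components is what the paper's terse phrasing is getting at.
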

\begin{proof}
Let $\varepsilon >0$ and consider $J_\varepsilon$ an $\varepsilon$-neighborhood of $J_f$. Let $U$ be a bounded Fatou component of $f$. Then in $U \setminus J_\varepsilon$, the sequence $(f^n)$ is normal and converges uniformly towards a constant, since $f$ has no Siegel disk. So the sequence $((f^n)')$ is also normal and converges uniformly towards $0$. In particular, it does not take the value $\lambda$ in $U \cap J_\varepsilon^c$ for $n$ large enough. Finally, assume $U$ is the basin of attraction of $\infty$. As no critical point of $f$ lies in $U$, then the same result follows from the fact that $(|(f^n)'|)$ converges uniformly to $\infty$ in  $U \setminus J_\varepsilon$.
\end{proof}
We now come to the proof of Theorem\ref{equi_poly2}.
\begin{proof}[Proof of Theorem~\ref{equi_poly2}]
Consider the logarithmic potential $u_n$ of $\nu_n^\lambda$ defined by :
\[u_n:=  \frac{1}{d^n-1} \log |(f^n)'-\lambda| -\frac{1}{d^n-1} \log d^n\]
where the additive constant $\frac{1}{d^n-1} \log d^n$ is chosen to take into account the fact that $(f^n)'$ is not unitary since the coefficient of its dominating term is $d^n$.
 We want to apply \cite[Lemma 15.5]{Brolin} which, in our case, say that if the limit inferior of the $u_n$ is non-positive on $J_f$ then $\mu'=\mu_f$, ending the proof.
Observe that $|f'|$ is uniformly bounded in the compact set $J_f$ (say by a constant $M$) so that $|(f^n)'|\leq M^n$ by the chain rule, since $J_f$ is a $f$-invariant set. In particular:
\[ \log |(f^n)'-\lambda|  \leq \log (|(f^n)'| + |\lambda|) \leq \log (M^n + |\lambda|) \]
on $J_f$. Hence, the logarithmic potential $u_n$ of $\nu_n^\lambda$  satisfies
\[u_n \leq (d^{-n}\log (M^n + |\lambda|) -\frac{1}{d^n-1} \log d^n \ \text{ on }J_f~,\]
which goes to $0$ with $n$. The result follows.
\end{proof}

\begin{remark} \normalfont
\begin{enumerate}
	\item Using potential theory is very efficient. Observe that we can prove, using the same proof, the equidistribution of the measures $\nu^{\lambda,k}_n$ defined by:
	\[ \nu^{\lambda,k}_n := \frac{1}{d^n-k}\sum_{(f^n)^{(k)}(z)-\lambda} \delta_z         \]
	for any $\lambda \neq 0$. 
	\item Nevertheless, the method works only for polynomials (Brolin method already fails for rational maps in the case of the equidistribution of preimages of a point). Moreover, it can be easily adapted to the setting of bifurcations only in special cases, as the quadratic family (see e.g.~\S\ref{sec:quad}).
\end{enumerate}
\end{remark}

\subsection{Complement 2: Lyubich's Inverse branches approach}
In this section, we prove the following theorem:
\begin{theorem}\label{equi_poly3}
Assume that $f$ is a hyperbolic polynomial of degree $d\geq2$. Then, for all $\lambda \neq 0$, we have the equidistribution:
\[ \lim_{n\to \infty} \nu^{\lambda}_n  = \mu_f ,\]
in the sense of measures. 
\end{theorem}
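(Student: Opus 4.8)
The plan is to follow Lyubich's construction of inverse branches, adapted to the map $G_\lambda := \frac{1}{\lambda}(f^n)'$ rather than $f^n$ directly, but exploiting hyperbolicity to control the geometry. Since $f$ is hyperbolic, there are a neighborhood $V$ of $J_f$, constants $C>0$ and $\kappa>1$, such that $|(f^n)'(z)|\geq C\kappa^n$ for all $z\in V$ and all $n$, and moreover $f^{-1}$ has well-defined inverse branches with uniform contraction on small disks meeting $J_f$. The measures $\nu_n^\lambda$ are already known (from the Brolin-type argument, Theorem~\ref{equi_poly2}, whose hypotheses a hyperbolic map satisfies) to be supported asymptotically on $J_f$, so the whole game takes place near $J_f$.

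The key steps, in order: First I would fix a finite cover of $J_f$ by small round disks $D_1,\dots,D_N$ on which all inverse branches of all iterates $f^{-k}$ are defined and uniformly contracting (Koebe-type distortion control from hyperbolicity), and I would show that for a test function $\varphi$ supported near $J_f$, the integral $\int\varphi\,d\nu_n^\lambda$ can be decomposed according to these inverse branches. Second, I would analyze the equation $(f^n)'(z)=\lambda$ by writing $(f^n)'(z)=\prod_{k=0}^{n-1} f'(f^k(z))$ and, for a choice of approximate itinerary, solving it by a fixed-point/implicit-function argument inside one of the disks: given a point $w\in J_f$ with $f^n$ nearly fixing its itinerary, the map $z\mapsto$ (inverse branch correcting the derivative constraint) is a contraction, producing exactly one solution per admissible itinerary of length roughly $n$. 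Third, I would count: there are $\sim d^n$ such itineraries, each solution $z$ is within $O(\kappa^{-n})$ of the corresponding preimage point of a fixed base point, so the empirical measure $\nu_n^\lambda$ is within $o(1)$ (in the weak sense) of $\frac{1}{d^n}\sum_{f^n(z_0)=*}\delta_z$, which converges to $\mu_f$ by Brolin--Lyubich. Finally I would handle the additive normalization $d^n$ versus $d^n-1$ and the finitely many ``bad'' itineraries (those passing near critical points, which are at positive distance from $J_f$ by hyperbolicity, hence contribute negligibly) to conclude $\nu_n^\lambda\to\mu_f$.

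The main obstacle I expect is the fixed-point step: unlike the equation $f^n(z)=a$, where inverse branches of $f$ directly give a contraction, the constraint $(f^n)'(z)=\lambda$ is not itself a ``backward orbit'' equation — it couples the point $z$ to its entire forward orbit through the product formula for the derivative. The trick will be to observe that on $V$ the modulus $|(f^n)'|$ grows like $\kappa^n$, so the level set $\{(f^n)'=\lambda\}$ is, for large $n$, a small perturbation of $\{(f^n)'=0\}$, i.e.\ of $\bigcup_{k<n} f^{-k}(\mathrm{Crit}(f))$; but the critical points are away from $J_f$, so the relevant solutions near $J_f$ must come from the ``smooth'' part and can be parametrized by genuine backward orbits of $f$ with a derivative correction that is exponentially small. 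Making this perturbation argument quantitative — showing the correction map is a contraction with the right fixed point on each tube around a backward orbit, uniformly in the itinerary — is the technical heart of the proof; once it is in place, the counting and the reduction to Brolin--Lyubich equidistribution of preimages are routine.
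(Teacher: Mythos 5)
Your framework (Lyubich-style inverse branches, hyperbolicity, Koebe-type distortion) is the right machinery, and it is indeed what the paper uses, but the central structural step of your proposal --- parametrizing solutions of $(f^n)'(z)=\lambda$ by admissible itineraries that stay near $J_f$, with a small derivative correction --- rests on a false premise, and the paper's argument runs in essentially the opposite direction.

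You correctly observe that $\{(f^n)'=\lambda\}$ is a perturbation of $\{(f^n)'=0\}=\bigcup_{k<n}f^{-k}(\mathrm{Crit}(f))$, but you then conclude that because $\mathrm{Crit}(f)$ is far from $J_f$ (hyperbolicity), the relevant solutions near $J_f$ must come from the ``smooth'' part, i.e.\ from genuine backward orbits that never meet a critical neighborhood. This is exactly backwards. Along any backward orbit that stays uniformly near $J_f$ (the ``good'' itineraries in your finite cover $D_1,\dots,D_N$), hyperbolicity forces $|(f^n)'|\geq C\kappa^n\to\infty$, so the equation $(f^n)'(z)=\lambda$ has \emph{no} solutions there once $n$ is large; your proposed fixed-point step has nothing to fix. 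All the solutions come from orbits that pass close to a critical point at some intermediate time --- precisely the ``bad'' itineraries you dismiss at the end as negligible, whereas in fact they carry all the mass of $\nu_n^\lambda$.

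What rescues the statement is the distinction between $\mathrm{Crit}(f)$ and its preimages: $\mathrm{Crit}(f)$ stays away from $J_f$, but the sets $f^{-n}(\mathrm{Crit}(f))$ accumulate on $J_f$ as $n\to\infty$ and equidistribute towards $\mu_f$ by Brolin--Lyubich. The paper's proof fixes a large integer $k$ and shows that each component $B_c^{i,n}$ of $f^{-n}(B_c)$ (a small disk around a critical point $c$), for depths $m-k\leq n<m$, contains exactly one solution of $(f^m)'(z)=\lambda$: the center $c_{i,n}\in f^{-n}(c)$ has $(f^m)'(c_{i,n})=0$ by the chain rule, while on $\partial B_c^{i,n}$ one has $|(f^m)'|\geq 2|\lambda|$, so the argument principle gives a unique solution in the disk. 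Summing over $c$, $n$ and the $d^n$ inverse branches accounts for $d^m-d^{m-k}$ of the $d^m-1$ solutions, and each depth-$n$ slice is uniformly close to $\frac{1}{d^n}\sum_{f^n(z)=c}\delta_z\to\mu_f$. To repair your proposal you would have to replace ``one solution per itinerary of length $n$ staying near $J_f$'' with ``one solution per preimage of a critical point at depth $n$ close to $m$,'' and keep, rather than discard, the itineraries that enter a critical neighborhood.
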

\begin{proof}
We can assume that no critical point is in the exceptional set : the preimages of any critical point accumulate toward the equilibrium measure (if not $f =z^d$ and the result follows from easy direct computations).
We first assume that no critical points is sent to another after some iterates and then explain what modifications need to be done to get to the general case. In particular, no critical point is periodic.  With our assumptions, the Julia set is uniformly expanding: choosing some smooth metric in a neighborhood $J_\eta$ of $J_f$, there exists $\rho >1$ such that the image of a ball of radius $\delta$ in  $J_\eta$ contains a ball of radius $\rho \delta$.

Fix $\varepsilon>0$ and $k\in \N$ large enough. Take some small ball $B_c$ around each critical points $c$, we consider the preimages $f^{-m}(B_c)$ for all $c$ and all $m\in \N$. Choosing  $B_c$ small enough guarantees that each $f^{-m}(B_c)$ consists of $d^m$ distinct connected components that we denote $B_c^{i,m}$ such that:
\begin{itemize}
\item the diameters of $B_c^{i,m}$ goes to $0$ when $m \to \infty$ by expansivity  ;
\item for $m \geq m_0$, $B_c^{i,m} \subset  J_\eta$ (all the preimages of the critical points end up in $J_\eta$);
\item for $(c,i,m)\neq (c',i',m')$, we have $B_c^{i,m} \cap B_{c'}^{i',m'} =\varnothing $ (this is clear for $m$ small enough by restricting the $B_c$ and it follows from the expansivity when all the preimages are in  $J_\eta$).
\end{itemize} 

Let $a:= \sup_{x\in \partial \left\{ \cup_{j=0}^{k-1} f^{-j}(B_c) \right\} } |(f^{k})'(x)| >0$ (no critical point lies in that set) and choose $M$ so that $M\cdot a \geq 2 |\lambda|$. For $m'$ large enough, we have that for all $z\in B_c^{i,m}$,  $|(f^n)'(z)| \geq M$ for all $m>n \geq m'$ (this follows from the expansivity of $J_\eta$). Take $m \geq m' +k  $ and let $ m > n \geq m-k$. Then for $x \in \partial B_c^{i,n}$, we have by the chain rule and the above:
 \[ |(f^m)'(x)|= |(f^{n})'(x)|\cdot|(f^{m-n})'(f^{n}(x))| \geq M\cdot a \geq 2 |\lambda|~.\]
On the other hand, if $c_{i,n}$ denotes the preimage by $f^n$ of $c$ that belongs to $B_c^{i,n}$, then:
$$ (f^m)'(c_{i,n})=0.$$
By the mean value theorem, we deduce that there exists a point $x_{c,i,n} \in B_c^{i,n}$ such that $(f^m)'(x_{c,i,n})=0$. Summing over all $i$ times the number of critical points gives $(d-1) d^n$ such points. Summing over all $n$ gives $d^m-d^{m-k}$ such points. In particular, we have the decomposition:
\begin{equation*}
\nu^\lambda_n  = \frac{1}{d^m-1} \sum_c \sum_{m-k\leq n<m} \sum_{i} \delta_{x_{c,i,n}}  +\theta_n  ~,
 \end{equation*}
 where $\theta_n$ is a measure of mass $\frac{d^{m-k}}{d^m-1} $ which can be taken $<\varepsilon$ for $k$ large enough. 
Denote:
\[\frac{1}{d^n}\sum_{i} \delta_{x_{c,i,n}}:= \mu_{c,n}~.\]
Then $\mu_{c,n} $ is a probability measure which is known to converge to $\mu$ when $n \to \infty$. Indeed, the sequence of measures equidistributed on the preimages of $c$ converges to $\mu$ when $m \to \infty$ and $|x_{c,i,n}-c_{i,n}| \to 0$ uniformly in $i$. Combining the above we have:
\[ \nu^\lambda_n  = \frac{1}{d^m-1} \sum_c \sum_{m-k\leq n<m}   \frac{d^n}{d^m-1} \mu_{c,n}  +\theta_n \to \mu_f~,\] 
in the sense of measures.

Let us now explain what are the modifications in the case where one critical point is sent to another after some iterates (assume that all the critical points are simple): given $m_0$ large enough, using Lyubich inverse branches ideas (see e.g. \cite{ljubich,briendduval2}) we can construct $(1-\varepsilon)d^{m_0}$ preimages of some small disks centered around each of the critical points. Removing some of those preimages, we can assume that $(1-2\varepsilon)d^{m_0}$ of those preimages lie in the expanding neighborhood $J_\eta$ of $J_f$. However for two of such preimages, $B$ and $B'$ may satisfy $f^j(B)\cap B' \neq 0$, this will happen if and only if $f^j(c)=c'$ where $c$ and $c'$ are the critical points whose preimages by $f^j$ are in $B$ and $B'$. Whenever that happen, we remove $B'$ from the list of preimages that we keep. Now, we can construct for each of those disks exactly $d^m$ preimages by $f^m$ for all $m$ (in particular, we take $m \gg j$ for all $j$ as above). Now, when counting as above the points $x$ in the preimages (more precisely their image by $f^l$ for $k\geq l>0$) such that $(f^m)'(x)=0$, observe that the preimages such that $f^j(B)\cap B'$ give several such points $x$ in their preimages because of the multiplicity of $(f^m)'(c_{i,n})=0$ ($c_{i,n}$ is the preimage of the critical point $c$ which is also a preimage of the point $c'$). The rest of the proof is similar.

Finally, if some critical points has multiplicity, the argument is the same taking into account that in each preimages, one has to take several $x$ counting the multiplicity.
\end{proof}

\begin{remark}\label{remark_equidistibution2}\normalfont
\begin{enumerate}
	\item The proof extends easily to the case of rational maps, with the same hypothesis on the critical points and hyperbolicity of the Julia set. It has also the interest to show what is the "typical" behavior of a point $x$ such that $(f^n)'(x)=\lambda$): the orbits stay near the Julia set for a long time, "gaining" hyperbolicity, then it is "ejected" and goes very close to a critical point, losing that hyperbolicity then its orbit follows the orbit of the critical point for a small time. Notice that the proof could be improved to work with weakened hypothesis using more subtlety inverse branches ideas and Pesin theory. 
	\item Nevertheless, the method of inverse branch cannot be used in the setting of bifurcations (or H\'enon maps).  
\end{enumerate}
\end{remark}

\section{In the parameter space of polynomial maps}

%\subsection{The framework}
We consider now the following polynomial map in $\mathbb{C}^{d-1} \times \C$ defined in the introduction :
\begin{eqnarray*}
f:  \C^{d-1} \times \C  & \longrightarrow & \C^{d-1} \times \C  \\
	  ((c,a),z )   &   \longmapsto & ((c,a), P_{c,a}(z) ) .  
\end{eqnarray*}
Our aim in this section is to prove Theorem~\ref{equi_bif}.

\subsection{A partial tangent map}
We consider the "partial" tangent map: 
 \begin{eqnarray*}
\tilde{f} : & {\C}^{d-1} \times \C \times \C   \longrightarrow & {\C}^{d-1} \times \C \times \C  \\
	          & ((c,a),z,u )      \longmapsto &    ((c,a), P_{c,a}(z), P'_{c,a}(z)\cdot u )   
\end{eqnarray*}
We shall still denote by $\tilde{f}$ its homogenous extension to $\P^{d+1}(\C) \times \P^1(\C)$:
 \begin{eqnarray*}
\tilde{f}:  \P(\C)^{d} \times \P^1(\C)  & \longrightarrow & \P(\C)^{d} \times \P^1(\C) \\
	         ([c:a:z:t],[u:v] )   &   \longmapsto & \left(\left[ct^{d-1}:a t^{d-1}:\frac{1}{d}z^d+\sum_{j=2}^{d-1}(-1)^{d-j}\frac{\sigma_{d-j}(c)}{j}z^j+a^d:t^d\right],\right. \\
	         &     & \hspace*{1cm}  \left.\left[z^{d-1}+\sum_{j=2}^{d-1}(-1)^{d-j}\sigma_{d-j}(c)z^{j-1})u:t^{d-1}v\right]\right).   
\end{eqnarray*}

In homogenous coordinate, with the convention $c_0=0$, the indeterminacy set of $\tilde{f}$ is
\begin{align*}
 I(\tilde{f})=&   \{t=0\}\cap \left\{\frac{1}{d}z^d+\sum_{j=2}^{d-1}(-1)^{d-j}\frac{\sigma_{d-j}(c)}{j}z^j+a^d=0\right\} \\
               &  \cup  \{t=0\}\cap\{u=0\} \cup\bigcup_{i=0}^{d-2} \{z=c_i\}\cap \{v=0\}.
\end{align*}
Observe that the (invariant) set $\{[c:a:z:0]\}\times \P^1\backslash  I(\tilde{f})$ is sent by $\tilde{f}$ to the fixed point $I':=([0:0:1:0],[1:0])$. For the rest of the text, we let $W$ be a small neighborhood of $I'$ such that $\tilde{f}(W)\subset W$. Observe that one cannot have $\tilde{f}(W)\Subset W$ because the fixed point $I'$ has neutral directions given by the parameters space variables.

 Let $\omega_1$ (resp. $\omega_2$) be the pull back of the (normalized) Fubini-Study form $\omega_{\P^{d}}$ of $\P^{d}$ (resp. $\omega_{\P^1}$ on $\P^{1}$) by the canonical projection to the first (resp. second) factor  $P_1: \P^{d}\times \P^1 \to \P^{d}$ (resp. $P_2: \P^{d}\times \P^1 \to \P^1$ ). Let $\{\omega_i\}$ denote the class of $\omega_i$ in $H^{1,1}( \P^{d}\times \P^1)$. 
\begin{lemma}
The map $\tilde{f}$ is algebraically stable i.e. $(\tilde{f}^*)^n=(\tilde{f}^n)^*$ in $H^{1,1}( \P^{d}\times \P^1)$. The action of $(\tilde{f})^*$ is given in the $(\{\omega_1\}, \{\omega_2\})$ basis by the matrix:
\[\begin{pmatrix} d & d-1 \\
                    0  & 1 
\end{pmatrix}~.\]
\end{lemma}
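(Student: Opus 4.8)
The plan is to mimic, in the relative (family) setting, the proof of the corresponding lemma for the tangent map $F$ in Section~2. The statement to prove is that $\tilde f$ is algebraically stable on $\P^{d}\times\P^1$ and that $\tilde f^*$ acts on $H^{1,1}$ in the basis $(\{\omega_1\},\{\omega_2\})$ by $\left(\begin{smallmatrix} d & d-1\\ 0 & 1\end{smallmatrix}\right)$. Since $H^{1,1}(\P^{d}\times\P^1)$ is two-dimensional, generated by $\{\omega_1\}$ and $\{\omega_2\}$, both claims are essentially computations of intersection numbers against curves, plus the standard criterion of Sibony for algebraic stability.

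First I would compute the matrix. The entries of $\tilde f^*$ in the $(\{\omega_1\},\{\omega_2\})$ basis are read off from the degrees of the defining polynomials. The first factor of $\tilde f$ depends only on $[c:a:z:t]$ and is the homogenized polynomial map of degree $d$ in those variables (it is just $f$ on the parameter$\times$phase space, trivial on parameters), so $\tilde f^*\{\omega_1\}=d\{\omega_1\}$: the second coordinate does not enter. For $\tilde f^*\{\omega_2\}$, the $[u:v]$-component is bihomogeneous of degree $d-1$ in $[c:a:z:t]$ and degree $1$ in $[u:v]$, whence $\tilde f^*\{\omega_2\}=(d-1)\{\omega_1\}+\{\omega_2\}$. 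Concretely, to get the coefficient $d-1$ of $\{\omega_1\}$ one intersects $\tilde f^*\omega_2$ with a generic line in a $P_1$-fiber (fix the parameters and $[z:t]$, vary $[u:v]$): the pullback restricts to a degree-one map on that $\P^1$, contributing $1$ — wait, rather, to get the $\{\omega_1\}$-coefficient one intersects with a curve on which $\omega_1$ has degree $0$; pairing $\tilde f^*\{\omega_2\}$ with $\{\omega_1\}^{d-1}\cdot(\text{a }P_2\text{-fiber class})$ etc. In any case these are the same bidegree bookkeeping computations as in the $F$ case and the matrix comes out as claimed.

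Next I would verify algebraic stability via Sibony's criterion: $\tilde f$ is algebraically stable iff no hypersurface is sent by some iterate $\tilde f^n$ into the indeterminacy set $I(\tilde f)$. Since $\tilde f$ is a skew product over the (trivial) base $\C^{d-1}$ and is holomorphic on $\C^{d-1}\times\C\times\C$, the only candidate hypersurfaces that could be contracted are the ones at infinity, i.e. pieces of $\{t=0\}$ and $\{v=0\}\cup\{[u:v]=\infty\}$. One uses the chain rule, exactly as in the phase-space lemma: $\tilde f^n$ is the extension of $((c,a),z,u)\mapsto((c,a),P_{c,a}^n(z),(P_{c,a}^n)'(z)\,u)$, and a fiber $\{([c:a:z:t])\}\times\P^1$ over a finite point $(c,a,z)$ has its subvariety $\{v=0\}$ (the $u=\infty$ slice, resp. the $u=0$ slice $\{([c:a:z:t])\}\times\{[0:1]\}$) sent to $\{([c:a:P_{c,a}(z):t'])\}\times\{[0:1]\}$ whether or not $z$ is critical, so no such fiber maps into $I(\tilde f)$. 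The remaining hypersurface $\{t=0\}$ (and its sub-loci) is sent, away from $I(\tilde f)$ itself, to the fixed point $I'=([0:0:1:0],[1:0])$, which is not in $I(\tilde f)$; hence $\tilde f^n(\{t=0\})\not\subset I(\tilde f)$. This gives algebraic stability, and then $(\tilde f^*)^n=(\tilde f^n)^*$ in $H^{1,1}$, so the matrix of $\tilde f^{n*}$ is the $n$-th power of the above.

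The main obstacle, as in the $F$-lemma, is purely the infinity bookkeeping: one must check carefully that \emph{every} irreducible component of the hypersurface at infinity — the several pieces making up $I(\tilde f)$ and the large locus $\{t=0\}$ — either lies in $I(\tilde f)$ already (and so is not "contracted onto" $I(\tilde f)$ in the sense that matters) or is sent by $\tilde f$ to the single fixed point $I'\notin I(\tilde f)$, and that this persists under iteration. The presence of the neutral parameter directions at $I'$ (so that $\tilde f(W)\Subset W$ fails, only $\tilde f(W)\subset W$ holds) does not affect algebraic stability, which is a purely cohomological/combinatorial statement; it is only relevant later for the current estimates. I would organize the infinity check by the same case split as the definition of $I(\tilde f)$: the portion of $\{t=0\}$ where the leading homogenized polynomial vanishes, the portion $\{t=0\}\cap\{u=0\}$, and the critical loci $\{z=c_i\}\cap\{v=0\}$; in each case a one-line computation with the explicit formula for $\tilde f$ shows the image is $I'$ or stays inside $I(\tilde f)$.
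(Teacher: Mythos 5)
Your overall strategy is the paper's: read the matrix off the bidegrees of the two components of $\tilde f$, and verify algebraic stability via Sibony's criterion by tracking explicit orbits of slices and of the point $I'$. The matrix computation is fine. The problem lies in the stability argument, specifically in the assertion that ``the only candidate hypersurfaces that could be contracted are the ones at infinity.'' This is unjustified and, taken literally, false: the critical hypersurfaces $\overline{\{z=c_i\}}\times\P^1$ meet the finite part $\C^{d-1}\times\C\times\C$ in a dense open set and are nevertheless contracted by $\tilde f$ onto the codimension-two sets $\{z=P_{c,a}(c_i)\}\cap\{u=0\}$ (holomorphy on the finite part does not prevent contraction there). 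Your fiber argument happens to cover these, since they are of the form $H\times\P^1$ with $H$ meeting $\C^{d}$ and the slice $\{p\}\times\{[0:1]\}$ has forward orbit $\{f^n(p)\}\times\{[0:1]\}$, which never meets $I(\tilde f)$ because $t\neq0$ and $v\neq0$ there. (Note that it is essential to track the $u=0$ slice and not the $u=\infty$ slice: the orbit $(f^n(p),[1:0])$ of the latter has $v=0$ and so lands in $I(\tilde f)$ whenever $P_{c,a}^n(z)$ hits a critical point, which makes it inconclusive.)

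What is genuinely missing is the case of a ``horizontal'' hypersurface $A$, i.e.\ one with $P_1(A)=\P^{d}$: such an $A$ is not of the form $H\times\P^1$ and need not contain the distinguished slices $u=0$ or $u=\infty$ of any fiber (think of $\{u=1\}$, or the graph of a nonconstant rational function of $(c,a,z)$). Your case split never reaches these. The paper disposes of them by choosing a fixed point $(c,a,z)$ of $f$ with $P'_{c,a}(z)\neq0$: the fiber $\P^1$ over it is forward invariant and disjoint from $I(\tilde f)$ (since $t\neq0$ and $z$ is not critical), and it meets $A$ because $P_1(A)=\P^{d}$; the orbit of the intersection point is then explicit and never reaches $I(\tilde f)$. (Alternatively, $P_1\circ\tilde f^n=f^n\circ P_1$ forces the image of a horizontal hypersurface to dominate $\P^{d}$, hence to have dimension at least $d>\dim I(\tilde f)$, so it cannot be contained in $I(\tilde f)$.) Once you add this case and make explicit the dichotomy $P_1(A)=\P^{d}$ versus $A=H\times\P^1$ (with the subcases $H\cap\C^{d}\neq\emptyset$ and $H=\{t=0\}$), your proof coincides with the paper's.
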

\begin{proof} 
We again rely on the characterization of the algebraic stability by the existence of some hypersurface sent to the indeterminacy set by some iterate (see~\cite{Sibony}).  Let $A$ be such an hypersurface, then $P_1(A)$ is an algebraic subvariety of $\P^{d}$. Assume  $P_1(A)=\P^{d}$. Let $[c:a:z:t]$ be a fixed point of $f$ such that $f'(z) \neq 0$ (it is obvious that there are infinitely many such points), and let $[u:v] \in \P^1$ such that $([c:a:z:t],[u:v])\in A$. By the chain rule, $\tilde{f}^n([c:a:z:t],[u:v])=([c:a:z:t],(f^n)'(z)\cdot u) \notin I(\tilde{f})$. This is a contradiction.

In particular,  $P_1(A)\neq \P^{d}$ so that $A = H\times \P^1$. 
We extend $f$ as a rational mapping to $\P^d$ by its homogenous extension to $\P^{d}$ and still denote it by $f$. Observe first that the map $f$ is algebraically stable: indeed, if it is not the case, since $f$ is a polynomial map in $\C^{d}$, the only possibility is that $\{t=0\}$ is sent to the indeterminacy set. This is not the case since $f (\{t=0\}) \backslash I(f) =[0:1:0]$.

If $H \neq \{t=0\}$, let $p \in H \cap \C^{d}$.
 By the chain rule, $\tilde{f}^n$ is the extension of $\tilde{f}^n(p,u)=(f^n(p),(f^n)'(p)\cdot u) $. Observe that $\{p\}\times \{[0:1]\} \subset \{p\}\times \P^1$ is sent to $\{f(p)\}\times\{[0:1]\}$ whether $p$ is critical or not. It follows from this that the line $\{p\}\times \P^1$ is not sent to the indeterminacy set by $ \tilde{f}^n$. 
 We assume now that  $H = \{t=0\}$. As the point $I'$ is sent to itself by $\tilde{f}$ then $\tilde{f}^n(I')=I' \notin I(\tilde{f})$. So $\tilde{f}$ is algebraically stable. The rest of the proof follows. 
 \end{proof}

\subsection{Convergence towards the Green current of the tangent map}

In the sequel, we let $\mathcal{T}$ be the Green current of the map $f$.
One result which follows applying the same proof as that of Lemma~\ref{lm:cvGreen}is the following.

\begin{lemma}\label{lm:cvGreenpara}
The sequence of positive closed currents $d^{-n} (\tilde{f}^n)^*(\omega_2)$ converges to the Green current $T_{\tilde{f}}= P_1^*(\mathcal{T})$ of $\tilde{f}$.
\end{lemma}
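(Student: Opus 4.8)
The plan is to mimic almost verbatim the proof of Lemma~\ref{lm:cvGreen}, transporting each step to the fibered setting over the parameter space $\C^{d-1}\times\C$. First I would note that the cohomology computation from the previous lemma gives $\tilde{f}^*\{\omega_2\}=\{\omega_2\}$ and $\tilde{f}^*\{\omega_1\}=d\{\omega_1\}$, so $d^{-n}(\tilde{f}^n)^*(\omega_1)$ is cohomologous to $\omega_1$ and (by algebraic stability plus the skew-product structure over $\C^{d-1}\times\C$, exactly as for the Green current $\mathcal{T}$ of $f$) converges to $P_1^*(\mathcal{T})=:T_{\tilde{f}}$. Writing $\omega_2=\omega_1+(\omega_2-\omega_1)$, everything reduces to showing $d^{-n}(\tilde{f}^n)^*(\omega_2-\omega_1)\to 0$ in the sense of currents.

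Next, since $\tilde{f}^*(\omega_2-\omega_1)$ is cohomologous to $\omega_2-\omega_1$, I would write $\tilde{f}^*(\omega_2-\omega_1)=\omega_2-\omega_1+dd^c\varphi$ with $\varphi$ dsh (in fact quasi-psh, since $\tilde{f}^*\omega_1$ is smooth), smooth away from $I(\tilde{f})$. Telescoping gives
\[
d^{-n}(\tilde{f}^n)^*(\omega_2-\omega_1)=d^{-n}(\omega_2-\omega_1)+dd^c\varphi_n,\qquad \varphi_n:=d^{-n}\sum_{k=0}^{n-1}\varphi\circ\tilde{f}^k.
\]
Using the neighborhood $W$ of the fixed point $I'$ with $\tilde{f}(W)\subset W$ and on which $\varphi$ is bounded, together with a smooth (hence PB) probability measure $\nu_W$ supported in $W$, I would bound $\|\varphi_n\|_{\infty,W}\le n d^{-n}\|\varphi\|_{\infty,W}$, hence $\|\varphi_n\|_{\nu_W}$ is uniformly bounded, hence $(\varphi_n)$ is bounded in $DSH$ and in $L^1$. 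Extracting a subsequence, $d^{-n}(\tilde{f}^n)^*(\omega_2)\to P_1^*(\nu')$ for some probability measure $\nu'$ on $\P^d$, and after a further extraction $\varphi_n\to V$ in $L^1$ with $dd^c V=P_1^*(\nu'-\mathcal{T})$, so $V=P_1^*(v)$ is pulled back from $\P^d$ with $dd^c v=\nu'-\mathcal{T}$.

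It then remains to identify $\nu'=\mathcal{T}$, equivalently to show $v$ is constant. As in the phase-space proof I would check that $\varphi_n\to 0$ pointwise on the open set where $c_i$ is passive and $(c,a,z)$ lies in the Fatou set of $P_{c,a}$: there the orbits $(P_{c,a}^n(z),(P_{c,a}^n)'(z)\cdot u)$ stay in a compact subset of $\C^2$ (interior of the filled Julia set, locally uniformly), forcing $\varphi_n\to 0$; and similarly near $([c:a:z:0],[u:v])$ with $u\ne0$ and on the escaping region the orbit converges to the attracting-in-the-fiber point $I'$, again giving $\varphi_n\to 0$. Hence $v$ vanishes on a large open set, and since it is dsh and vanishes off a set with empty interior (the bifurcation/Julia locus fibered over parameters), pluri-fine continuity forces $v\equiv 0$, so $\nu'=\mathcal{T}$ and $T_{\tilde{f}}=P_1^*(\mathcal{T})$ is the only limit, giving convergence of the full sequence. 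Finally one records that $T_{\tilde{f}}$ is indeed the Green current of $\tilde{f}$, which follows since it is the limit of $d^{-n}(\tilde{f}^n)^*(\omega_1)$ with $\tilde{f}^*T_{\tilde{f}}=dT_{\tilde{f}}$. The main obstacle, compared with the phase-space case, is that $\tilde{f}(W)\Subset W$ fails because $I'$ has neutral directions along the parameter variables; so the argument must be phrased purely in terms of $\tilde{f}(W)\subset W$ and the uniform bound $\|\varphi_n\|_{\infty,W}\le C$, and the pointwise-vanishing step must be carried out fiberwise, invoking the fact (from Dujardin--Favre) that the non-activity region has the product structure needed to make "empty interior of the Julia locus in each fiber" suffice for the pluri-fine continuity conclusion.
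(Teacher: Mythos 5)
Your proposal does essentially what the paper intends: the paper only says the lemma ``follows applying the same proof as that of Lemma~\ref{lm:cvGreen}'' and then moves directly to a sliced version, so you are correctly filling in the transposed argument. The structure is right: decompose $\omega_2=\omega_1+(\omega_2-\omega_1)$, reduce to $d^{-n}(\tilde f^n)^*(\omega_2-\omega_1)\to0$, telescope to get the dsh functions $\varphi_n$, bound them in DSH via a PB measure supported in the invariant neighborhood $W$ of $I'$ (using only $\tilde f(W)\subset W$, which is correct --- the failure of $\Subset$ is harmless because the uniform bound $\|\varphi_n\|_{\infty,W}\le n d^{-n}\|\varphi\|_{\infty,W}$ only needs forward invariance and boundedness of $\varphi$ on $W$, which holds since $I'\notin I(\tilde f)$), extract a subsequential limit $V=P_1^*(v)$, show $v$ vanishes off a thin set, and conclude by pluri-fine continuity.

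Two small corrections. First, the identification of the vanishing region is simpler than you suggest and does not involve activity of $c_i$: for Lemma~\ref{lm:cvGreenpara} you are working on all of $\P^d\times\P^1$, not on the slice $\{z=P_{c,a}(c_i)\}$, so the set you want $v$ to vanish on is $\{(c,a,z): z\ \text{in the Fatou set of}\ P_{c,a}\}$ together with the escape region near $I'$, and the relevant thin set is the fibered Julia locus $\{(c,a,z): z\in J_{P_{c,a}}\}\cup\{t=0\}$, which has empty interior simply because each $J_{P_{c,a}}$ has empty interior in the $z$-variable --- no structural input from Dujardin--Favre is needed. Passivity of a critical point and the product structure of the non-activity region only become relevant in the subsequent Proposition, where the argument is restricted to the slice $\{z=P_{c,a}(c_i)\}$ and the paper replaces the $W$-bound by an $L^1$-bound on stability components. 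Second, the parenthetical ``$\varphi$ is in fact quasi-psh since $\tilde f^*\omega_1$ is smooth'' is valid in phase space because $f$ is holomorphic on $\P^1$, but in the parameter setting $f$ extends only meromorphically to $\P^d$ and $\tilde f^*\omega_1$ is smooth only away from $I(\tilde f)$; the paper accordingly only claims $\varphi$ is dsh and smooth off $I(\tilde f)$, which is all the argument uses.
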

Unfortunately, this result itself is not useful in our case. Indeed, to define the activity current $T_i$ one needs to \emph{intersect} the Green current $T_{\tilde{f}}$ with the current of integration on $\{z=P_{c,a}(c_i)\}$. Such an operation is \emph{not} continuous in the sense of currents, hence Lemma~\ref{lm:cvGreenpara} does not provide sufficient informations for our purpose. Instead, we shall prove directly the following:
\begin{proposition}
The sequence $(d^{-n} (\tilde{f}^n)^*(\omega_2) \wedge [z= P_{c,a}(c_i)])_n$ of positive closed currents of bidegree $(2,2)$ is well defined and converges to the current
\[\mathcal{T}_{\tilde{f}}:= P_1^*(\mathcal{T}) \wedge [z= P_{c,a}(c_i)]~.\]
\end{proposition}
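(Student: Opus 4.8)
The plan is to mimic the proof of Lemma~\ref{lm:cvGreenpara} but carry along the intersection with the graph $\{z=P_{c,a}(c_i)\}$ throughout. The key point is that, after pulling back by $\tilde f^n$, the current of integration $[z=P_{c,a}(c_i)]$ behaves like the graph of the $n$-th critical value $P^{n+1}_{c,a}(c_i)$, a hypersurface on which the potentials defining $d^{-n}(\tilde f^n)^*(\omega_2)$ are controlled. First I would observe that $d^{-n}(\tilde f^n)^*(\omega_2)\wedge[z=P_{c,a}(c_i)]$ is well defined because $\omega_2$ is smooth and $[z=P_{c,a}(c_i)]$ is a smooth hypersurface, so the wedge reduces (after pushing forward by $P_1$ or restricting to the graph) to the pull-back of a smooth form by the holomorphic map $(c,a)\mapsto (F^i_{n+1}(c,a))' $ acting on the $u$-variable, i.e. the operation $\mathcal{T}\wedge[z=P_{c,a}(c_i)]$ is the same as computing $d^{-n}((\text{something holomorphic}))^*\omega_{\P^1}$ which is always a legitimate positive closed current of bidegree $(2,2)$ with bounded mass on compacts.

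Next I would reproduce the cohomological bookkeeping from Lemma~\ref{lm:cvGreenpara}: write $\omega_2=\omega_1+(\omega_2-\omega_1)$, so that $d^{-n}(\tilde f^n)^*(\omega_2)\wedge[z=P_{c,a}(c_i)] = d^{-n}(\tilde f^n)^*(\omega_1)\wedge[z=P_{c,a}(c_i)] + d^{-n}(\tilde f^n)^*(\omega_2-\omega_1)\wedge[z=P_{c,a}(c_i)]$. The first term converges to $P_1^*(\mathcal T)\wedge[z=P_{c,a}(c_i)]$ because $d^{-n}(\tilde f^n)^*(\omega_1)=d^{-n}(f^n)^*(\pi^*\omega_{\P^d})$ converges to $P_1^*(\mathcal T)$ with \emph{locally bounded potentials} (it is $P_1^*$ of the usual Green-current convergence with quasi-potentials controlled), and wedging with a fixed smooth hypersurface is continuous for sequences of currents with uniformly locally bounded potentials. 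So the whole problem reduces to showing $d^{-n}(\tilde f^n)^*(\omega_2-\omega_1)\wedge[z=P_{c,a}(c_i)]\to 0$.

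For that last point I would write $\tilde f^*(\omega_2-\omega_1)=\omega_2-\omega_1+dd^c\varphi$ with $\varphi$ quasi-psh (as in Lemma~\ref{lm:cvGreenpara}), set $\varphi_n:=d^{-n}\sum_{k=0}^{n-1}\varphi\circ\tilde f^k$, and get $d^{-n}(\tilde f^n)^*(\omega_2-\omega_1)=d^{-n}(\omega_2-\omega_1)+dd^c\varphi_n$. Using $\tilde f(W)\subset W$ one bounds $\|\varphi_n\|_{\infty,W}$, hence (equivalence of DSH-norms) $\varphi_n$ is bounded in $L^1$, and restricting to the smooth hypersurface $X_i:=\{z=P_{c,a}(c_i)\}$ one gets $\varphi_n|_{X_i}$ bounded in $L^1(X_i)$ and in fact (after extracting) convergent in $L^1_{loc}(X_i)$. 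The key geometric input is that $X_i$, under iteration by $\tilde f$, tracks the parametrized critical orbit: for $(c,a)$ in the region where $c_i$ is passive, $F^i_{n}(c,a)$ stays in a compact set (normal family, converging to a constant if no Siegel disc — otherwise one argues on the complement of the activity locus, whose interior is the passivity locus, using that the escaping and bounded behaviors force $\varphi_n\to 0$), exactly as in Lemma~\ref{lm:cvGreen}; whereas on the basin of escape the orbit tends to $I'$ and again $\varphi_n\to 0$. Hence the $L^1_{loc}(X_i)$-limit of $\varphi_n|_{X_i}$ is pluri-harmonic off the activity locus and, since the activity locus is the support of $T_i$ (a closed current with no interior mass), pluri-fine continuity forces the limit of $dd^c(\varphi_n|_{X_i})$ to match $P_1^*(\mathcal T)\wedge[z=P_{c,a}(c_i)]-\lim d^{-n}(\tilde f^n)^*(\omega_2-\omega_1)\wedge[z=P_{c,a}(c_i)]$ restricted appropriately; comparing with the $\omega_1$-term identifies the limit as $\mathcal T_{\tilde f}$.

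The main obstacle I anticipate is the \emph{continuity of the wedge product} on the restricted hypersurface: unlike in Lemma~\ref{lm:cvGreenpara}, where convergence is of honest currents in an open set, here one wedges a converging sequence of $(1,1)$-currents with a fixed $(1,1)$-current $[z=P_{c,a}(c_i)]$, and this is legitimate only because $\omega_2-\omega_1$ (and each $(\tilde f^n)^*$ of it) can be written with potentials whose restriction to $X_i$ is well-behaved — precisely, one must check that the $dd^c$-potentials of $d^{-n}(\tilde f^n)^*(\omega_2-\omega_1)$ restrict to $L^1$ functions on $X_i$ with no mass escaping, i.e. that $X_i$ is not charged by the relevant currents $T^\pm_n$. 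This is where the explicit description of $\tilde f$ on $X_i$ as the parametrized cocycle $(c,a)\mapsto (P^{n}_{c,a})'(P_{c,a}(c_i))$ must be used carefully, together with the fact (from Dujardin--Favre) that $d^{-n}(F^i_n)^*\omega_{\P^1}\to T_i$ already controls the relevant potentials on $X_i$.
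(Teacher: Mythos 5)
Your high-level strategy is the same as the paper's: write $\omega_2=\omega_1+(\omega_2-\omega_1)$, handle the $\omega_1$-term via uniform convergence of potentials, reduce the $\omega_2-\omega_1$ term to showing $\varphi_n:=d^{-n}\sum_{k<n}\varphi\circ\tilde f^k$ restricted to $X_i:=\{z=P_{c,a}(c_i)\}$ tends to $0$ in $L^1(X_i)$, and finish with a pluri-fine continuity argument on the passivity locus. However, you do not actually close the main technical step, and you say so yourself: the ``main obstacle'' you flag in your last paragraph is exactly the point the paper's proof is designed to settle.

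Concretely, your route to $L^1(X_i)$-boundedness of $\varphi_n|_{X_i}$ is to bound $\|\varphi_n\|_{\infty,W}$ using $\tilde f(W)\subset W$ and then invoke equivalence of DSH-norms to get an \emph{ambient} $L^1$ bound on $\varphi_n$, and then restrict. This does not give what you need: $X_i$ has Lebesgue measure zero in $\P^d\times\P^1$, so an ambient $L^1$ bound says nothing about the restriction (the restriction of a quasi-psh function to a hypersurface can be $-\infty$ on a large set, and in general one would have to rule out that $X_i$ is charged). The paper avoids this entirely by \emph{never} going through the ambient space: it bounds $\tilde\varphi_n=\varphi_n|_{X_i}$ \emph{directly and uniformly} on compact subsets of the passivity region of $c_i$ (where $(P_{c,a}^n(c_i))_n$ and $((P^n_{c,a})'(P_{c,a}(c_i)))_n$ are normal, hence the $\tilde f^k$-orbit of a point of $X_i$ stays in a fixed compact set, giving $\|\tilde\varphi_n\|_{\infty,V}\leq n d^{-n}\|\varphi\|_{\infty}$), and then applies the equivalence of DSH-norms \emph{on the hypersurface} $X_i$, against a smooth PB measure supported in that compact. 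Your appeal to Dujardin--Favre's convergence $d^{-n}(F^i_n)^*\omega_{\P^1}\to T_i$ does not substitute for this: that concerns the critical-orbit pullback, not the derivative cocycle, and in any case does not show that $X_i$ is unchanged by the currents $T^\pm_n$.

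Two smaller points. The discussion involving Siegel disks is imported from the phase-space Lemma and is irrelevant here: in parameter space one works on the passivity locus, where normality (not a classification of Fatou components) already gives boundedness. Finally, the closing identification of the limit as you wrote it is circular --- you invoke the quantity $\lim d^{-n}(\tilde f^n)^*(\omega_2-\omega_1)\wedge[z=P_{c,a}(c_i)]$ while trying to prove that this limit is $0$; the paper instead extracts a limit $V=\tilde\pi^*(v)$ of $\tilde\varphi_n$, shows $v\equiv 0$ on the stability locus (bounded and escaping cases), and concludes by pluri-fine continuity since the unstability locus has empty interior.
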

\begin{proof}
Observe first that  $I(\tilde{f}^n) \cap \{z= P_{c,a}(c_i)\}$ has codimension $3$ (this follows from the fact that $ \{z= P_{c,a}(c_i)\} \cap  \{P_{c,a}^{n-1}(z)= c_i\}$ has codimension $2$ in $\C^{d} \times \C$. In particular, $d^{-n} (\tilde{f}^n)^*(\omega_2) \wedge [z= P_{c,a}(c_i)]$ is well defined since the restriction of the current 
$d^{-n} (\tilde{f}^n)^*(\omega_2)$ to  $ [z= P_{c,a}(c_i)]$ is smooth outside $I(\tilde{f}^n)$.

 Write $\omega_2= \omega_1 + \omega_2-\omega_1$. Since $d^{-n} (\tilde{f}^n)^*(\omega_1)$ converges to $T_{\tilde{f}}$ with  local uniform convergence of the potentials,
all there is left to prove is that $d^{-n} (\tilde{f}^n)^*(\omega_2- \omega_1)\wedge [z= P_{c,a}(c_i)]$ converges to $0$ in the sense of currents. Observe that $\tilde{f}^*(\omega_2- \omega_1)$ is cohomologous to $\omega_2- \omega_1$ hence we can write 
\[\tilde{f}^*(\omega_2- \omega_1)= \omega_2- \omega_1+dd^c \varphi\]
where $\varphi$ is smooth outside $I(\tilde{f})$ and is a dsh function.  
A straight-forward induction gives:
\[d^{-n} (\tilde{f}^n)^*(\omega_2- \omega_1)= d^{-n}(\omega_2- \omega_1)+dd^c  \left(d^{-n}\sum_{k=0}^{n-1} \varphi \circ \tilde{f}^k \right) .\]
We write $\varphi_n:=d^{-n}\sum_{k=0}^{n-1}  \varphi \circ \tilde{f}^k$. We have reduced the problem of proving the convergence of $d^{-n} (\tilde{f}^n)^*(\omega_2) \wedge [z= P_{c,a}(c_i)]$ to  $\mathcal{T}_{\tilde{f}}= P_1^*(\mathcal{T}) \wedge [z= P_{c,a}(c_i)]$ to proving that the sequence $(\varphi_n)$ restricted to $\{z= P_{c,a}(c_i)\}$ converges to $0$ in $L^1(\{z= P_{c,a}(c_i)\})$. We denote by $\tilde{\varphi}_n$ the restriction of $\varphi_n$ to $\{z= P_{c,a}(c_i)\}$.

%continuity of the wedge product $d^{-n} (\tilde{f}^n)^*(\omega_2) \wedge [z= P_{c,a}(c_i)]$?
The sequence of functions $(\tilde{\varphi}_n)$ is then a sequence of dsh functions on $\{z=P_{c,a}(c_i)\}$ by construction. Furthermore, if $U$ denotes a stability component of $c_i$ on which its orbit is bounded, then for all $V \Subset U$, the families $(P^n_{c,a}(c_i))$ and $((P^n_{c,a})'(c_i))$ are equicontinuous hence equibounded. In particular, for any compact set $K'\in \C$, there exist compact sets $L\subset \C$ and $K \subset \C$ such that for all $(c,a) \in V$ and all $u \in K'$, for any $n \in \N^*$, one has:
\[P^n_{c,a}(P_{c,a}(c_i)) \in L \ \mathrm{and} \ ((P^n_{c,a})'(P_{c,a}(c_i))\cdot u)\in K'.\]

Let $\nu_{L\times K}$ be a smooth (hence PB) probability measure with support in $L\times K$. Since $\varphi$ is smooth in $\C^{d} \times \C$, we have
$\|\tilde{\varphi}_n\|_{\infty, V}\leq nd^{-n} \|\varphi\|_{\infty, L\times K}\leq C$ where $C$ is a constant that does not depend on $n$.  
In particular, $\|\varphi_n\|_{\nu_{L\times K}}$ is uniformly bounded. As all the $DSH$-norms are equivalent, we deduce that $(\tilde{\varphi}_n)$ is bounded in $L^1$ for the standard Fubini Study measure in $\{z= P_{c,a}(c_i)\}$. 

On the other hand, the sequence $d^{-n} (\tilde{f}^n)^*(\omega_2)\wedge [z= P_{c,a}(c_i)]$ is bounded in mass hence we can extract a converging subsequence. Its limit is a positive closed current cohomologuous to $\omega_1 \wedge [z= P_{c,a}(c_i)]$ hence it can be written as $\tilde{\pi}^*(T')$ where $T'$ is some $(1,1)$ current in $[z= P_{c,a}(c_i)]$ and $\tilde\pi$ denote the projection to the first coordinate of $\{z= P_{c,a}(c_i) \}$.
 Extracting again, we can assume that $(\tilde{\varphi}_n)$ converges in $L^1$ (it is bounded in DSH ). Its limit $V$ satisfies $dd^c V= \tilde{\pi}^*(T'-\mathcal{T})$ hence it is constant on each fiber of $\tilde{\pi}$. In other words, $V=\pi_1^*(v)$ for some dsh function $v$ on $\{(c,a,z)\, ; \ z= P_{c,a}(c_i) \}$. The above argument shows that $v=0$ on any stability component of $c_i$ on which its orbit is bounded. The case of the component on which $c_i$ goes to $\infty$ is similar using this time a arbitrary compact sets of the form $L \times K$ where $K$ is a compact set on the complementary of $0 \in \C$ (indeed, the derivative converges to $\infty$). 

It follows that the function $v$ is equal to $0$ in the stability locus. Now, the unstability locus has empty interior (this is classical and follows directly from Montel theorem) and a dsh function that is $0$ outside such set is identically $0$ by pluri-fine continuity. It follows that $\tilde{\pi}^*(T')= \tilde{\pi}^*(\mathcal{T})$ which ends the proof.
\end{proof}
The following is now obvious:
\begin{corollary}\label{cor:cvpara}
For any PB probability measure $\nu$ on $\P^1$, the sequence of currents $d^{-n} (\tilde{f}^n)^*(\pi_2^*(\nu))\wedge [z= P_{c,a}(c_i)]$ converges to $\mathcal{T}_{\tilde{f}}$ in the sense of currents.
\end{corollary}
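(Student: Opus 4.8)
The plan is to reduce to the case $\nu=\omega_{\P^1}$ settled in the previous Proposition, in exactly the way Corollary~\ref{cor:PB} was deduced in Section~2. Recall that any PB probability measure $\nu$ on $\P^1$ can be written $\nu=\omega_{\P^1}+dd^c\eta$ with $\eta$ a bounded function on $\P^1$; pulling back by the second projection gives $\pi_2^*(\nu)=\omega_2+dd^c(\eta\circ\pi_2)$, where $\eta\circ\pi_2$ is a bounded function on $\P^{d}\times\P^1$. Since $\eta\circ\pi_2$ is globally bounded, pulling back its $dd^c$ by the dominant meromorphic map $\tilde f^n$ is legitimate and functoriality yields $dd^c((\eta\circ\pi_2)\circ\tilde f^n)=(\tilde f^n)^*(\pi_2^*\nu)-(\tilde f^n)^*(\omega_2)$. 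Setting $\psi_n:=d^{-n}(\eta\circ\pi_2)\circ\tilde f^n$ we obtain the identity of currents
\[d^{-n}(\tilde f^n)^*(\pi_2^*\nu)=d^{-n}(\tilde f^n)^*(\omega_2)+dd^c\psi_n~.\]

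The next step is to wedge this identity with $[z=P_{c,a}(c_i)]$. Both wedge products on the right-hand side are well defined: for $d^{-n}(\tilde f^n)^*(\omega_2)$ this was checked in the previous Proposition (its restriction to $\{z=P_{c,a}(c_i)\}$ is smooth away from $I(\tilde f^n)$, and $I(\tilde f^n)\cap\{z=P_{c,a}(c_i)\}$ has codimension at least $3$), while $dd^c\psi_n\wedge[z=P_{c,a}(c_i)]$ is just $dd^c$ of the restriction of $\psi_n$ to the smooth hypersurface $\{z=P_{c,a}(c_i)\}$. Now $\|\psi_n\|_{L^\infty}\le d^{-n}\|\eta\|_{L^\infty(\P^1)}\to 0$, so $\psi_n\to 0$ uniformly, hence in $L^1$, on $\{z=P_{c,a}(c_i)\}$, and therefore $dd^c\psi_n\wedge[z=P_{c,a}(c_i)]\to 0$ in the sense of currents. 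Combining this with the convergence $d^{-n}(\tilde f^n)^*(\omega_2)\wedge[z=P_{c,a}(c_i)]\to\mathcal T_{\tilde f}$ provided by the Proposition gives
\[d^{-n}(\tilde f^n)^*(\pi_2^*\nu)\wedge[z=P_{c,a}(c_i)]\longrightarrow\mathcal T_{\tilde f}~,\]
which is the assertion.

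The only point requiring a little care — and the reason this corollary is stated as ``obvious'' — is the bookkeeping around the meromorphic pullbacks and the wedge products in the presence of the indeterminacy set: one needs that $d^{-n}(\tilde f^n)^*(\pi_2^*\nu)\wedge[z=P_{c,a}(c_i)]$ has locally bounded mass (which follows from algebraic stability and the computed action of $\tilde f^*$ on $H^{1,1}$) and that the displayed identity holds as currents and not merely in cohomology (which follows from the global boundedness of $\eta\circ\pi_2$). Both are dealt with exactly as in the proof of the previous Proposition, so no new difficulty arises; this is the main — and essentially clerical — obstacle.
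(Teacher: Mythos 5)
Your argument is exactly the one the paper leaves implicit: write $\nu=\omega_{\P^1}+dd^c\eta$ with $\eta$ bounded (as the authors state before Corollary~\ref{cor:PB}), push this decomposition through $\pi_2^*$ and $(\tilde f^n)^*$, restrict to $\{z=P_{c,a}(c_i)\}$, and observe that the resulting error term $dd^c\psi_n$ is $dd^c$ of a function of sup-norm $O(d^{-n})$, hence negligible. The proposal is correct and spells out the same reduction the paper deems ``obvious.''
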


\subsection{Value distribution of the derivative}

For $u_0 \in \C$, recall that we denote by $[u=u_0]$ the current of integration on the hyperplane $\P^d\times \{u_0\}$.
\begin{proposition}

For any $u_0 \in \C^*$, the sequence of currents $d^{-n} (\tilde{f}^n)^*([u=u_0])$ converges to $\mathcal{T}_{\tilde{f}}\wedge [z= P_{c,a}(c_i)]$ in the sense of currents.
\end{proposition}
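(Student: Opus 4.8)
The plan is to reduce this statement to the already-established Corollary~\ref{cor:cvpara} by the same trick used in the proof of Proposition~\ref{for_u_0} in the phase-space setting. First I would observe that the current $d^{-n}(\tilde f^n)^*([u=u_0])$ makes sense: by algebraic stability and the chain rule, $\tilde f^n(c,a,z,u)=(c,a,P^n_{c,a}(z),(P^n_{c,a})'(z)\cdot u)$, so the pullback $(\tilde f^n)^*[u=u_0]$ is (away from $I(\tilde f^n)$) the current of integration on the hypersurface $\{(P^n_{c,a})'(z)\cdot u=u_0\}$ of $\P^d\times\P^1$, of mass $d^n-1$ in the relevant cohomology class; the potentials are bounded away from the indeterminacy set, which has codimension $\geq 2$.

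The key step is the scaling symmetry. For $\lambda\in\C^*$, let $D_\lambda\colon ((c,a,z),u)\mapsto((c,a,z),\lambda u)$ on $\P^d\times\P^1$. Since $\tilde f$ commutes with the fiberwise rescaling only up to the $z$-dependent factor $P'_{c,a}(z)$, the right identity is instead the functorial one: $(\tilde f^n)^*([u=\lambda^{-1}u_0])=D_\lambda^*\big((\tilde f^n)^*([u=u_0])\big)$, exactly as in Proposition~\ref{for_u_0}. Now by Corollary~\ref{cor:cvpara} applied with the smooth (hence PB) measure $\nu=\omega_{\P^1}$ — together with the observation that $d^{-n}(\tilde f^n)^*[u=u_0]\wedge[z=P_{c,a}(c_i)]$ converges to $\mathcal{T}_{\tilde f}\wedge[z=P_{c,a}(c_i)]$ for $u_0$ outside a polar set, obtained by writing $\pi_2^*(\omega_{\P^1})$ as an average of $[u=u_0]$ over $u_0$ and using positivity/mass arguments — we get the convergence for at least one value $u_0\in\C^*$. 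Then, since $\mathcal{T}_{\tilde f}=P_1^*(\mathcal{T})$ is pulled back from the parameter-and-$z$ factor, it is invariant under $D_\lambda^*$; applying $D_\lambda^*$ (which is continuous on currents with locally bounded potentials) to the convergence at $u_0$ and choosing $\lambda$ so that $\lambda^{-1}u_0$ is an arbitrary prescribed nonzero value yields the convergence of $d^{-n}(\tilde f^n)^*[u=u_0]$ to $\mathcal{T}_{\tilde f}$ for every $u_0\in\C^*$.

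One point requires care: the target stated in the proposition is written as $\mathcal{T}_{\tilde f}\wedge[z=P_{c,a}(c_i)]$, but $\mathcal{T}_{\tilde f}$ already equals $P_1^*(\mathcal{T})\wedge[z=P_{c,a}(c_i)]$, so one must be consistent about whether the slice $[z=P_{c,a}(c_i)]$ is being taken once or twice; the intended statement is that after slicing $d^{-n}(\tilde f^n)^*[u=u_0]$ with $[z=P_{c,a}(c_i)]$ one recovers $\mathcal{T}_{\tilde f}$, matching Corollary~\ref{cor:cvpara}, and the proof should phrase things in those terms. The main obstacle I anticipate is justifying continuity of the slicing $\wedge[z=P_{c,a}(c_i)]$ along the chosen subsequences: one needs the geometric fact (already used in the preceding proposition) that $I(\tilde f^n)\cap\{z=P_{c,a}(c_i)\}$ has codimension $3$, so the slices are well-defined with no mass created on the indeterminacy locus, and that the potentials of $d^{-n}(\tilde f^n)^*([u=u_0])$ are uniformly bounded near that locus so that the $D_\lambda^*$-transported convergence survives intersection with $[z=P_{c,a}(c_i)]$.
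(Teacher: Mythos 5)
Your argument is essentially identical to the paper's: apply Corollary~\ref{cor:cvpara} with $\nu=\omega_{\P^1}$ to get the sliced convergence for $u_0$ outside a polar (hence proper) subset of $\C$, then propagate to all $u_0\in\C^*$ via the rescalings $D_\lambda$, using the functorial identity $(\tilde f^n)^*[u=\lambda^{-1}u_0]=D_\lambda^*(\tilde f^n)^*[u=u_0]$ and $D_\lambda^*$-invariance of $\mathcal{T}_{\tilde f}=P_1^*(\mathcal{T})\wedge[z=P_{c,a}(c_i)]$. You also correctly spotted that the stated limit redundantly repeats the slice $\wedge[z=P_{c,a}(c_i)]$ and that the intended statement (matching both Corollary~\ref{cor:cvpara} and the paper's proof) is that $d^{-n}(\tilde f^n)^*[u=u_0]\wedge[z=P_{c,a}(c_i)]\to\mathcal{T}_{\tilde f}$; the only superfluous part of your write-up is the worry about slicing continuity along subsequences, since the slicing has already been absorbed into Corollary~\ref{cor:cvpara} and the remaining step is just the continuous map $D_\lambda^*$.
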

\begin{proof} By the above Corollary~\ref{cor:cvpara}, one deduce that outside a polar set of $u_0 \in \C$, $d^{-n} (\tilde{f}^n)^*([u=u_0])\wedge [z= P_{c,a}(c_i)]$ converges to $\mathcal{T}_{\tilde{f}}$ in the sense of currents. Choose such a $u_0 \neq 0$. For $\lambda \in \C^*$, consider the map $D_\lambda: (c,a,z,u) \mapsto (c,a,z, \lambda u)$. Observe that 
$$d^{-n} (\tilde{f}^n)^* ( [u=\frac {1}{\lambda} u_0]) \wedge [z= P_{c,a}(c_i)] = D_\lambda^*(d^{-n} (\tilde{f}^n)^* ( [u=u_0]) \wedge [z= P_{c,a}(c_i)]).$$ 
Now, $D_\lambda^*(d^{-n} (\tilde{f}^n)^* ( [u=u_0]) \wedge [z= P_{c,a}(c_i)])$ converges to $D_\lambda^*(\mathcal{T}_{\tilde{f}})$ in the sense of currents by assumption and continuity of $D_\lambda^*$. Since $\mathcal{T}_{\tilde{f}}= P_1^*(\mathcal{T} \wedge [z= P_{c,a}(c_i)] )  $, we have $D_\lambda^*(\mathcal{T}_{\tilde{f}})=\mathcal{T}_{\tilde{f}}$. The result follows since taking a $u_1 \in \C^*$ arbitrary, we can write $ \lambda u_1 = u_0$ for a suitable $\lambda$.
\end{proof}

We can now prove Theorem~\ref{equi_bif}. Let us restate it:
\begin{theorem}\label{equi_bif1}
Let $\nu$ be a PB measure in $\C$, then we have the equidistribution:
\[\lim_{n\to \infty} \frac{1}{d^n}\int_\C[(P_{c,a}^n)'(P_{c,a}(c_i))=\lambda] \mathrm{d}\nu(\lambda) = T_i~.\]
In particular, outside a polar set of $\lambda \in \C$, we have:
\[ \lim_{n\to \infty} \frac{1}{d^n} \left[ (P_{c,a}^n)'(P_{c,a}(c_i)) =\lambda \right] = T_i.\] 
\end{theorem}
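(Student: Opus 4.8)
The structure mirrors exactly the proof of Theorem~\ref{equi_poly1} in the phase-space case, with the role of the tangent map $F$ replaced by the partial tangent map $\tilde f$ and the role of $T_F$ replaced by the current $\mathcal T_{\tilde f}= P_1^*(\mathcal T \wedge [z=P_{c,a}(c_i)])$. The starting point is the previous Proposition, which gives $d^{-n}(\tilde f^n)^*([u=u_0])\to \mathcal T_{\tilde f}$ for every $u_0\in\C^*$ (note this already intersects with $[z=P_{c,a}(c_i)]$, since $\mathcal T_{\tilde f}$ is carried by that hypersurface). The key elementary identity is that, working in the affine chart, the equation defining $(\tilde f^n)^*([u=u_0])$ together with the slice $\{u=1\}$ is symmetric under swapping $u_0\leftrightarrow u_0^{-1}$: concretely, $(\tilde f^n)^*([u=u_0])\wedge[u=1]=(\tilde f^n)^*([u=1])\wedge \pi_2^*(\mathrm{inv}^*[u=u_0])$, where $\mathrm{inv}(z)=z^{-1}$ on $\P^1$. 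Integrating this against $\nu(\lambda)$ (Fubini, exactly as in~\eqref{inv}), one gets
\[
(\tilde f^n)^*(\pi_2^*(\nu))\wedge [u=1]\wedge[z=P_{c,a}(c_i)] = (\tilde f^n)^*([u=1])\wedge \pi_2^*(\mathrm{inv}^*\nu)\wedge[z=P_{c,a}(c_i)].
\]

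\textbf{Carrying it out.} First I would record that it suffices to prove the statement for one fixed PB measure $\nu$, say $\nu=\omega_{\P^1}$, since proving it for one PB measure implies it for all (the DSH-norms are equivalent, so the potential of any PB $\nu$ differs from that of $\omega_{\P^1}$ by a bounded function, and bounded functions can be multiplied against the currents involved continuously). Next, using the Proposition, $d^{-n}(\tilde f^n)^*([u=1])\to \mathcal T_{\tilde f}$; since $\pi_2^*(\mathrm{inv}^*\omega_{\P^1})$ has continuous local potentials on the relevant region, the wedge product is continuous, so
\[
d^{-n}(\tilde f^n)^*([u=1])\wedge \pi_2^*(\mathrm{inv}^*\omega_{\P^1}) \wedge [z=P_{c,a}(c_i)]\ \longrightarrow\ \mathcal T_{\tilde f}\wedge \pi_2^*(\mathrm{inv}^*\omega_{\P^1}).
\]
Because $\mathcal T_{\tilde f}=P_1^*(\mathcal T\wedge[z=P_{c,a}(c_i)])$ is a pullback under $P_1$, it is invariant under $D_\lambda$ and annihilated in the fiber direction, so $(\pi_{d-1})_*\big(\mathcal T_{\tilde f}\wedge \pi_2^*(\mathrm{inv}^*\omega_{\P^1})\big)= (\pi_{d-1})_*\big(P_1^*(\mathcal T\wedge[z=P_{c,a}(c_i)])\big)= \tfrac1d\,(\pi_{d-1})_*(\mathcal T\wedge[z=P_{c,a}(c_i)]) = T_i$ by~\eqref{def_T_i}. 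Pushing forward by the (proper, hence continuous on currents) projection $\pi_{d-1}$ and using the Fubini identity above turns the left side into $d^{-n}\int_\C[(P_{c,a}^n)'(P_{c,a}(c_i))=\lambda]\,\mathrm d\nu(\lambda)$ (after unwinding that $(\tilde f^n)^*([u=u_0])\wedge[u=1]\wedge[z=P_{c,a}(c_i)]$, pushed to $\C^{d-1}$, is precisely the fiber-slice $[(P_{c,a}^n)'(P_{c,a}(c_i))=u_0^{-1}]$, and $\mathrm{inv}^*\nu$ has the same mass as $\nu$ with $\lambda\mapsto\lambda^{-1}$). This gives the first assertion.

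\textbf{The pointwise statement.} For the second item, the set of $\lambda\in\C^*$ for which $d^{-n}[(P_{c,a}^n)'(P_{c,a}(c_i))=\lambda]$ fails to converge to $T_i$ is, by the first assertion combined with a standard lower-semicontinuity/mass argument (the potentials $d^{-n}\log|(P_{c,a}^n)'(P_{c,a}(c_i))-\lambda|$ are dsh with controlled norms, and their $\nu$-averages converge), a Borel set of zero mass for every PB measure on $\C$; such a set is polar. So $E_i$ is the union of $\{0\}$-type exceptional behavior and this polar set, hence polar.

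\textbf{Main obstacle.} The delicate point is the same as in the phase-space argument but is genuinely harder here: justifying that the wedge products $d^{-n}(\tilde f^n)^*([u=1])\wedge[z=P_{c,a}(c_i)]$ are \emph{well-defined} and \emph{continuous} under the limit, given that $\tilde f$ has a fixed point $I'$ with neutral directions (so one only has $\tilde f(W)\subset W$, not $\tilde f(W)\Subset W$) and a positive-dimensional indeterminacy set. This is exactly what the previous Proposition was designed to supply — the codimension-$3$ estimate on $I(\tilde f^n)\cap\{z=P_{c,a}(c_i)\}$ and the $L^1$-boundedness of the restricted potentials $\tilde\varphi_n$ — so the bulk of the work is already done; here one must additionally check that intersecting with the continuous-potential form $\pi_2^*(\mathrm{inv}^*\omega_{\P^1})$ and then applying $(\pi_{d-1})_*$ commute with the limit, which follows because $\pi_{d-1}$ restricted to the support is proper and the currents have uniformly bounded mass. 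I would flag the identification of the pushforward of the fiberwise slice with the parameter-space slice $[(P_{c,a}^n)'(P_{c,a}(c_i))=\lambda]$ as the one computation worth spelling out carefully, since it is where the factor $\tfrac1d$ and the chain-rule bookkeeping for $P_{c,a}(c_i)$ versus $c_i$ enter.
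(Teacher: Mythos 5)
Your proposal is correct and follows essentially the same route as the paper's proof: the Fubini/inversion identity for the wedge with $[u=1]$, reduction to $\nu=\omega_{\P^1}$ via equivalence of DSH norms, continuity of wedging with the continuous-potential form $P_2^*(\mathrm{inv}^*\omega_{\P^1})$, pushforward to parameter space, and the standard polar-set argument for the pointwise statement. Your remark flagging the $\tfrac{1}{d}$ bookkeeping from~\eqref{def_T_i} and the identification of the pushed-forward fiber slice with $[(P_{c,a}^n)'(P_{c,a}(c_i))=\lambda]$ is in fact a point the paper passes over quickly, so the flag is well placed.
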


\begin{proof}
Denote by $p:\C^d\rightarrow\C^{d-1}$ the projection onto the parameter variables, i.e. the map defined by $p(c,a,z)=(c,a)$.
Let $u_0 \in \C^*$, then observe that $(\tilde{f}^n)^*([u=u_0])  \wedge [z= P_{c,a}(c_i)] \wedge [u=1] = (\tilde{f}^n)^*([u=1])  \wedge [z= P_{c,a}(c_i)]\wedge [u=u_0^{-1}]$. Let $\mathrm{inv}:\P^1\to \P^1$ be the rational map $z \mapsto  z^{-1}$. In other words, the currents $(\tilde{f}^n)^*([u=u_0] \wedge [z= P_{c,a}(c_i)]) \wedge [u=1]$ and $(\tilde{f}^n)^*([u=1] \wedge [z= P_{c,a}(c_i)]) \wedge  P_2^* (\mathrm{inv}^*([u=u_0]))$ are equal.

By Fubini Theorem, we have for $\nu$,
\[  (\tilde{f}^n)^*(P_2^*(\nu)) \wedge [z= P_{c,a}(c_i)]\wedge [u=1] = (\tilde{f}^n)^*([u=1]) \wedge [z= P_{c,a}(c_i)] \wedge P_2^*( \mathrm{inv}^*(\nu)). \]   
As before, it is enough to prove the first point for any PB measure, so we take $\nu=\omega_2$ the Fubini-Study form ($\mathrm{inv}^*(\omega_2)=\omega_2$). 
By the above proposition  $d^{-n} (\tilde{f}^n)^*([u=1]) \to \mathcal{T}_{\tilde{f}}$ in the sense of currents, hence  $d^{-n} (\tilde{f}^n)^*([u=u_0]) \wedge [z= P_{c,a}(c_i)]\wedge P_2^*( \omega_2)\to \mathcal{T}_{\tilde{f}}\wedge P_2^*( \omega_2)$ since $P_2^*(\omega_2)$ has continuous potentials. By continuity of $(P_1)_*$ and $p_*$, and using the equality between the currents $(\tilde{f}^n)^*([u=u_0] \wedge [z= P_{c,a}(c_i)]) \wedge [u=1]$ and $(\tilde{f}^n)^*([u=1] \wedge [z= P_{c,a}(c_i)]) \wedge  P_2^* (\mathrm{inv}^*([u=u_0]))$ gives the first part of the theorem.

The second part follows since the set of $\lambda$ for which the convergence
\[ \lim_{n\to \infty} \frac{1}{d^n} \left[ (P_{c,a}^n)'(P_{c,a}(c_i)) =\lambda \right] = T_i.\] 
does not hold has zero measure for all the PB measures of $\C$. Such a set is polar.
\end{proof}

\subsection{In the quadratic family}\label{sec:quad}
We now focus on the quadratic family parametrized by $p_c(z):=z^2+c$, for $c\in\C$. 
We denote by $K_c$ the filled-in Julia set of $p_c$ and by $J_c$ the Julia set:
\[K_c:=\left\{z\in \C~|~\left(p_c^{\circ n}(z)\right)_{n\in \N}\text{ is bounded}\right\}\quad\text{and}\quad 
J_c:=\partial K_c.\]
The Mandelbrot set $M$ is the set of parameters $c\in \C$ such that $0\in K_c$. It is a compact connected subset of $\C$. The Green function $g_M:\C\to [0,+\infty[$ of the Mandelbrot set is
\[g_M(c)=g_{p_c}(c)~, \ c\in\C~.\]
The  bifurcation measure of the quadratic family is $\mu_\bif=\frac{1}{2}dd^cg_M$. Its support is the boundary of the Mandelbrot set $M$. Moreover, the probability measure $\mu_M=2\mu_\bif$ is the harmonic measure of the Mandelbrot set (see e.g.~\cite{carleson}). 

Pick a polynomial $\lambda(c)\in\C[c]$ and for any integer $n\geq1$, let
\[\nu_n^\lambda:=\frac{1}{2^n}\sum_{(p_c^n)'(c)=\lambda(c)}\delta_c~,\]
where we take into account the multiplicity in the sum. Beware that given a polynomial $\lambda$, the measure $\nu_n^\lambda$ has mass $(2^{n}-1)/2^n\sim1$ for $n$ large enough.
We want here to prove the following more general result.

\begin{theorem}\label{equiMand}
For any polynomial $\lambda(c)\in\C[c]$, the sequence of finite measures $(\nu_n^\lambda)_{n\geq1}$ converges weakly to the harmonic measure $\mu_\Mand$ of the Mandelbrot set.
\end{theorem}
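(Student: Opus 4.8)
The plan is to adapt the Brolin-type argument of \S\ref{sec:brolin} to parameter space, using that for $p_c(z)=z^2+c$ the derivative along the critical orbit factors completely. Since $p_c'(z)=2z$ and $p_c(0)=c$, the chain rule gives
\[(p_c^n)'(c)=\prod_{j=0}^{n-1}2\,p_c^j(c)=2^n\prod_{j=0}^{n-1}p_c^j(c),\]
and by induction (from $p_c^{j+1}(c)=p_c^j(c)^2+c$) each $p_c^j(c)$ is a monic polynomial in $c$ of degree $2^j$. Hence, as soon as $\deg\lambda<2^n-1$, the polynomial $(p_c^n)'(c)-\lambda(c)$ has degree $2^n-1$ and leading coefficient $2^n$; so $\nu_n^\lambda$ has mass $(2^n-1)/2^n$ and uniformly bounded support (the roots of $2^n\prod_jp_c^j(c)-\lambda(c)$ stay in a fixed disc), with logarithmic potential
\[u_n(c):=\int_{\C}\log|c-w|\,d\nu_n^\lambda(w)=\frac1{2^n}\log\bigl|(p_c^n)'(c)-\lambda(c)\bigr|-\frac{n\log 2}{2^n}.\]
Since $\mathrm{cap}(M)=1$, the function $g_M$ is exactly the logarithmic potential of $\mu_M=dd^cg_M$, i.e. of the harmonic measure $\mu_{\Mand}$; so it suffices to show that every weak limit $\mu'$ of a subsequence of $(\nu_n^\lambda)$ equals $\mu_M$.

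I would then record two estimates on the $u_n$. First, $\limsup_n u_n\le g_M$ on all of $\C$, and $u_n\to g_M$ pointwise on $\C\setminus M$: this follows from $\prod_{j<n}|p_c^j(c)|\le\exp\sum_{j<n}\log^+|p_c^j(c)|$ together with the standard locally uniform convergence $2^{-j}\log^+|p_c^j(c)|\to g_{p_c}(c)=g_M(c)$, which by a Ces\`aro summation yields $2^{-n}\sum_{j<n}\log^+|p_c^j(c)|\to g_M$ locally uniformly (and on $\C\setminus M$ the critical orbit escapes, so the inequality becomes an equality in the limit). Second, and crucially, $\limsup_n u_n\le0$ on $M$, in particular on $\partial M=\supp\mu_M$: for $c\in M$ the critical orbit stays in $\{|z|\le 2\}$, so $|p_c'(p_c^j(c))|=2|p_c^j(c)|\le 4$ and hence $|(p_c^n)'(c)|\le 4^n$ by the chain rule, whence $\limsup_n u_n(c)\le\lim_n\tfrac1{2^n}\log\bigl(4^n+\sup_M|\lambda|\bigr)=0=g_M(c)$. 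This is the exact analogue of the bound $|(f^n)'|\le M^n$ on $J_f$ from \S\ref{sec:brolin}.

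Finally I would invoke \cite[Lemma 15.5]{Brolin}, exactly as in \S\ref{sec:brolin}: for a weak limit $\mu'$ of the $\nu_n^\lambda$ (whose supports lie in a fixed compact and whose masses tend to $1$), the potential of $\mu'$ is dominated by $g_M$ everywhere and by $0=g_M$ on $\supp\mu_M$, and since $\mathrm{cap}(M)=1$ this forces $\mu'=\mu_M$; as every subsequential limit is then $\mu_M$, the full sequence $(\nu_n^\lambda)$ converges to $\mu_M=\mu_{\Mand}$. The step I expect to be the genuine obstacle is exactly the one that Brolin's lemma is tailored to absorb, namely ruling out that mass of $\mu'$ escapes into $\mathrm{int}\,M$: on hyperbolic components one has $(p_c^n)'(c)\to0$ locally uniformly, so the equation $(p_c^n)'(c)=\lambda(c)$ has no solution on a fixed compact for $n$ large and no mass is produced there; but on a hypothetical queer component no such convergence is available, so the direct normal-families argument that works in phase space (where one assumes "no Siegel disk") is not available, and one is forced to identify $\mu'$ through the potential-theoretic uniqueness encoded in \cite[Lemma 15.5]{Brolin}, for which only the two elementary estimates above are needed.
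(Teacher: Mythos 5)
Your pointwise estimates are sound and closely parallel the paper's: the factorization $(p_c^n)'(c)=2^n\prod_{j<n}p_c^j(c)$, the resulting convergence $u_n\to g_M$ on $\C\setminus M$, and the support bound on the roots are all essentially what the paper proves (the paper gets the first by a quantitative estimate from \cite{distribGV} rather than Ces\`aro summation, but the content is the same). The extra bound $\limsup u_n\le 0$ on $M$ is correct too. The problem is the final step: Brolin's Lemma~15.5 does not do what you want it to do here. That lemma, as used in \S\ref{sec:brolin}, concerns measures supported on (or asymptotically concentrating on) a compact of positive capacity with \emph{empty interior}, namely $J_f$; and that is exactly why \S\ref{sec:brolin} first proves, as a separate lemma, that $\supp\mu'\subset J_f$ (using the hypotheses on Siegel disks and escaping critical points) \emph{before} invoking Brolin. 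The lemma is not a device that rules out interior mass; it \emph{assumes} it has been ruled out. To see that the conditions you established do not by themselves force $\mu'=\mu_M$, consider the closed unit disk $\overline{\D}$, which also has $\mathrm{cap}(\overline{\D})=1$ and continuous Green function $g_{\overline\D}=\log^+|z|$ vanishing on $\overline\D$: the measure $\delta_0$ has potential $\log|z|$, which is $\le g_{\overline\D}$ everywhere, equals $g_{\overline\D}$ on $\C\setminus\overline\D$, and is $\le 0$ on $\overline\D$ (in particular on $\partial\D=\supp\mu_{\overline\D}$); yet $\delta_0$ is not the equilibrium measure. So ``potential $\le g_M$ everywhere and $\le 0$ on $\supp\mu_M$ with $\mathrm{cap}(M)=1$'' does not force $\mu'=\mu_M$; you have only shown that $u'\le 0$ on $M$, whereas what you need is $u'=0$ there, and some control from \emph{below} in the interior is indispensable.

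This is precisely what the Buff--Gauthier Lemma~\ref{lemma:comparisonM}, which the paper uses instead, supplies: any subharmonic $u$ on $\C$ agreeing with $g_M$ on $\C\setminus M$ equals $g_M$ on all of $\C$. As the $\overline\D$ example shows, this is not a generic potential-theoretic fact; it is a structural property of the Mandelbrot set. Once this lemma is in hand, the argument is just (i) pointwise convergence $u_n\to g_M$ on $\C\setminus M$ (which you have), (ii) Hartogs compactness to extract $L^1_{\textup{loc}}$ limits, and (iii) the lemma to identify every such limit with $g_M$; your extra bound on $M$ then becomes unnecessary, and no support condition on $\mu'$ is ever needed, which is exactly how the paper sidesteps the queer-component issue you correctly identified as the obstacle. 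So the fix is not a sharper application of Brolin's lemma, but replacing it by Lemma~\ref{lemma:comparisonM}.
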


In~\cite[Lemma 2]{multipliers}, Buff and the first author prove the following we rely on:
\begin{lemma}[Buff-Gauthier]\label{lemma:comparisonM}
Any subharmonic function $u:\C\to [-\infty,+\infty[$ which coincides with $g_M$ on $\C\setminus M$ coincides with $g_M$ on $\C$. 
\end{lemma}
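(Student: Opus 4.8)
The plan is to pass to logarithmic potentials and reduce the statement to an $L^1_{loc}$-convergence of subharmonic functions to $g_M$, where Lemma~\ref{lemma:comparisonM} can be brought to bear. By the chain rule,
\[(p_c^n)'(c)=\prod_{k=0}^{n-1}p_c'\big(p_c^k(c)\big)=2^n\prod_{k=0}^{n-1}p_c^k(c),\]
so for $n$ large enough $Q_n(c):=(p_c^n)'(c)-\lambda(c)$ is a polynomial in $c$ of degree $2^n-1$ with leading coefficient $2^n$. Setting
\[u_n(c):=\frac{1}{2^n}\log\big|(p_c^n)'(c)-\lambda(c)\big|,\]
we obtain a subharmonic function on $\C$ with $dd^cu_n=\frac{1}{2^n}\sum_{Q_n(c)=0}\delta_c=\nu_n^\lambda$. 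Since $\mu_\Mand=dd^cg_M$, it suffices to prove that $u_n\to g_M$ in $L^1_{loc}(\C)$: applying $dd^c$ then gives $\nu_n^\lambda\to\mu_\Mand$ as distributions, hence weakly as measures, because the $\nu_n^\lambda$ have uniformly bounded mass and are supported in a fixed disk (for $|c|$ large, $|(p_c^n)'(c)|$ dominates $|\lambda(c)|$ uniformly in $n$, so $Q_n$ has no root there).

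The heart of the matter is the pointwise convergence $u_n(c)\to g_M(c)$ for every $c\in\C\setminus M$. Fix such a $c$; then $p_c^k(c)\to\infty$, and in particular $p_c^k(c)\neq0$ for all $k$ (otherwise $c$ would be preperiodic, hence in $M$). The standard telescoping estimate for escaping orbits gives $2^{-k}\log|p_c^k(c)|=g_{p_c}(c)+O(2^{-k})=g_M(c)+O(2^{-k})$ for $k\ge k_0(c)$, whence $2^{-n}\sum_{k=0}^{n-1}\log|p_c^k(c)|\to g_M(c)$. Since moreover $2^{-n}\log 2^n\to0$ and $\lambda(c)/(p_c^n)'(c)\to0$, so that $2^{-n}\log\big|1-\lambda(c)/(p_c^n)'(c)\big|\to0$, we get $u_n(c)=2^{-n}\log\big|(p_c^n)'(c)\big|+2^{-n}\log\big|1-\lambda(c)/(p_c^n)'(c)\big|\to g_M(c)$.

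For precompactness, note that on any compact $K$ there is $M_K>1$ with $|p_c^k(c)|\le M_K^{2^k}$ for $c\in K$ and $k\ge0$, so $u_n\le\log M_K+o(1)$ on $K$; thus $(u_n)$ is a family of subharmonic functions locally uniformly bounded above, from which any subsequence admits a further subsequence converging in $L^1_{loc}$ to some subharmonic $u$ (the limit $-\infty$ being excluded since $u_n\to g_M$ pointwise on $\C\setminus M$, a set of positive measure on which $g_M$ is finite). Such a limit $u$ satisfies $u=g_M$ a.e. on $\C\setminus M$, hence $u=g_M$ on the open set $\C\setminus M$ by the mean value property of $u$ and continuity of $g_M$; Lemma~\ref{lemma:comparisonM} then forces $u=g_M$ on all of $\C$. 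As every subsequential $L^1_{loc}$-limit of $(u_n)$ equals $g_M$, we conclude $u_n\to g_M$ in $L^1_{loc}(\C)$, which proves the theorem.

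The main difficulty lies in the pointwise analysis on the escaping set $\C\setminus M$: controlling the Ces\`aro-type average $2^{-n}\sum_{k<n}\log|p_c^k(c)|$ together with the moving target $\lambda(c)$, so as to identify the limit of the potentials with $g_M$ off $M$. Once this is in place, what makes the exceptional set of Theorem~\ref{equi_bif} disappear in the quadratic family is entirely encoded in Lemma~\ref{lemma:comparisonM}, a rigidity property special to the Mandelbrot set.
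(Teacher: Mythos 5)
Your text does not prove the statement it was assigned. The statement is Lemma~\ref{lemma:comparisonM} itself --- the rigidity property that a subharmonic function on $\C$ agreeing with $g_M$ off $M$ must agree with $g_M$ everywhere --- whereas what you wrote is a proof of Theorem~\ref{equiMand} (the equidistribution of the measures $\nu_n^\lambda$ towards $\mu_\Mand$), and at the decisive step you invoke Lemma~\ref{lemma:comparisonM} as a black box (``Lemma~\ref{lemma:comparisonM} then forces $u=g_M$ on all of $\C$''). Read as an argument for the lemma, this is circular; read as a submission, it addresses the wrong statement. (For what it is worth, your argument for Theorem~\ref{equiMand} is essentially the paper's own: potentials $u_n^\lambda$, pointwise convergence to $g_M$ on $\C\setminus M$ via the chain rule $(p_c^n)'(c)=2^n\prod_{k<n}p_c^k(c)$ and the escape-rate estimate, Hartogs compactness, then the lemma.) Note also that the paper supplies no proof of the lemma either: it is quoted from \cite[Lemma 2]{multipliers}, so the only honest way to ``prove'' it here is to reproduce or reconstruct the Buff--Gauthier argument, which you have not attempted.

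The lemma is moreover not a formality that could be absorbed into soft subharmonicity arguments: the analogous statement is false for a general full compact set. For instance, with $K=\overline{\D}$ one has $g_{\overline{\D}}=\log^+|z|$, and $u=\max\bigl(\log|z|,-1\bigr)$ is subharmonic on $\C$, coincides with $g_{\overline{\D}}$ outside $\overline{\D}$, yet satisfies $u(z)=\log|z|<0=g_{\overline{\D}}(z)$ on the annulus $e^{-1}<|z|<1$. What comes for free is only one inequality: since $\C\setminus M$ is a connected open set accumulating at every point of $\partial M$ (hence non-thin there) and $g_M$ is continuous and vanishes on $M$, upper semicontinuity gives $u=0$ on $\partial M$, and the maximum principle on each component of the interior of $M$ then gives $u\le 0=g_M$ on $M$. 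The entire content of the Buff--Gauthier lemma is the reverse inequality $u\ge 0$ on the interior of $M$, which relies on specific structure of the Mandelbrot set and is exactly what a proof must supply; your proposal contains no step in that direction.
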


This lemma is similar to the extremality property used in Brolin's approach in Section~\ref{sec:brolin}.

\begin{proof}[Proof of Theorem~\ref{equiMand}]
Pick any polynomial $\lambda(c)\in\C[c]$ and for $n\geq1$, let
\[u_n^\lambda(c):=\frac{1}{2^n}\log|(p_c^n)'(c)-\lambda(c)|~, \ c\in\C~,\]
so that $\nu_n^\lambda=dd^c(u_n^\lambda)$. By definition, the sequence of subharmonic functions $(u_n^\lambda)$ is locally uniformly bounded from above. Hence, by Hartogs lemma,
\begin{itemize}
\item either $u_n^\lambda\to-\infty$ uniformly locally on $\C$,
\item or it admits subsequences which converge in $L^1_{\textup{loc}}(\C)$.
\end{itemize}
According to Lemma~\ref{lemma:comparisonM}, to conclude it is sufficient to prove that $u_n^\lambda\to g_M$ pointwise on $\C\setminus M$. Indeed, if this holds true any $L^1_\textup{loc}$ limit $u$ of $(u_n^\lambda)$ is subharmonic on $\C$ and coincides with $g_M$ on $\C\setminus M$. By Lemma~\ref{lemma:comparisonM}, this means that $u_n^\lambda\to g_M$ in $L^1_\textup{loc}$ and the conclusion follows.

Pick now $c\in\C\setminus M$, then there exists $C>0$ universal such that
\[\left|\frac{1}{2^n}\log|p_c^n(c)|-g_M(c)\right|\leq C\frac{n}{2^n}~,\]
for all $n\geq1$ (see~\cite[Lemma 4.1]{distribGV}). On the other hand, by the chain rule, we have $(p_c^n)'(c)=2^n\prod_{k=0}^{n-1}p_c^k(c)$, hence 
\begin{eqnarray*}
\left|\frac{1}{2^n}\log|(p_c^n)'(c)|-g_M(c)\right| & \leq & \sum_{k=0}^{n-1}\frac{1}{2^{n-k}}\left|\frac{1}{2^k}\log|p_c^k(c)|-g_M(c)\right|+\frac{1}{2^n}g_M(c)+\frac{n}{2^n}\log2\\
& \leq & \frac{1}{2^n}\left(g_M(c)+n\log 2+Cn^2\right)~.
\end{eqnarray*}
In particular, we get $\lim_{n\to\infty}2^{-n}\log|(p_c^n)'(c)|=g_M(c)$ and $(p_c^n)'(c)\to\infty$ as $n\to\infty$. To conclude, we just have to remark that
\begin{eqnarray*}
\lim_{n\to\infty}u_n^\lambda(c) & = & \lim_{n\to\infty}\frac{1}{2^n}\log|(p_c^n)'(c)|+ \lim_{n\to\infty}\frac{1}{2^n}\log\left|1-\frac{\lambda(c)}{(p_c^n)'(c)}\right|\\
 & = & g_M(c)~,
\end{eqnarray*}
which concludes the proof of the Theorem. 
\end{proof}

%Beware that the strategy of the proof of the main result of \cite{DistribTbif} is a generalization of that of \cite{multipliers}, which we follow closely here.
The proof we implement here can not be easily generalized to higher degrees to prove that exceptional sets from Theorem~\ref{equi_bif} are empty. Indeed, the pointwise estimates in $\C\setminus M$ used in the proof we need have to be replaced with more elaborate estimates, as in \cite{distribGV}. The estimates we can prove in that context concern only the \emph{fastest} escaping critical point.

Moreover, this strategy is specifically designed for proving codimension $1$ equidistribution phenomena, since the proof gives the $L^1_{\textup{loc}}$ convergence of the potentials. On the other hand, the proof of Theorem~\ref{equi_bif} we give above can be generalized to higher codimension objects.

\bibliographystyle{short}
\bibliography{biblio}
\end{document}